\newtheorem{theorem}{Theorem}[section]
\newtheorem{proposition}{Proposition}[section]
\newtheorem{lemma}{Lemma}[section]
\newtheorem{definition}{Definition}[section]
\newtheorem{remark}{Remark}[section]
\newcommand{\dvg}{\mathrm{~d}V_{\mathbb B^N}}
\newcommand{\grad}{\nabla}
\newcommand{\bn}{\mathbb{B}^N}
\newcommand{\abs}[1]{\left\vert#1\right\vert}
\newcommand{\la}{\lambda}
\newcommand{\De}{\Delta}
 \makeatletter \@addtoreset{equation}{section} \makeatother
\begin{document}

\title[Mixed Local-Nonlocal Operator in the Hyperbolic Space]{Elliptic Problems Involving Mixed Local-Nonlocal Operator in the Hyperbolic Space}
\keywords{Hyperbolic space, Fractional Laplacian, Elliptic equations, Mixed operators, Existence results, Local-Nonlocal, Variational Methods, Critical Exponent}
\author[D.~Gupta]{Diksha Gupta}
\address{D.~Gupta, Department of Mathematics, Indian Institute of Technology Delhi, Hauz Khas, New Delhi, Delhi 110016, India}
\email{dikshagupta1232@gmail.com}
\author[K.~Sreenadh]{K.~Sreenadh}
\address{K.~Sreenadh, Department of Mathematics, Indian Institute of Technology Delhi, Hauz Khas, New Delhi, Delhi 110016, India}
\email{sreenadh@maths.iitd.ac.in}

\begin{abstract}
This paper explores the existence of solutions to a class of nonlinear elliptic equations involving a mixed local-nonlocal operator of the form $-\Delta_{\bn} + (-\Delta_{\bn})^s$, with $0 < s < 1$, set in the hyperbolic space. By employing variational methods, we address both subcritical and critical nonlinearities, establishing the existence of weak solutions under appropriate conditions.

\medskip
\noindent
\textbf{2020 MSC:} 35B09, 35B38, 35J20,35J61, 35R01
\end{abstract}

\maketitle

\section{Introduction}

In this work, we study the existence of solutions to a class of nonlinear elliptic problems involving mixed-order local and nonlocal operator on the hyperbolic space $ \bn $. Specifically, we consider the equation
\begin{equation}
\begin{cases}
-\Delta_{\bn} u + (-\Delta_{\bn})^s u - \lambda u = |u|^{p-1}u, & u \in H^1(\bn),
\end{cases}
\tag{$P_{\lambda,s}$}\label{mainEq}
\end{equation}
where $ \Delta_{\bn} $ is the Laplace–Beltrami operator and $ (-\Delta_{\bn})^s $ is the fractional Laplacian of order $ 0 < s < 1 $, both defined on the hyperbolic space $ \bn $. The parameter $ \lambda < \frac{(N-1)^2}{4} $, the bottom of the $ L^2 $-spectrum of $ -\Delta_{\bn} $, and the nonlinearity exponent satisfies $ 1 < p < 2^*-1 = \frac{N+2}{N-2} $ for $ N \geq 3 $. The natural variational setting for this problem is the Sobolev space $ H^1(\bn) $ (refer to Section~\ref{PrelimSection}).

\smallskip
In addition to this subcritical setting, we also consider a problem involving critical nonlinearity. Specifically, we study a perturbed critical exponent problem of the form
\begin{equation}
\begin{cases}
-\Delta_{\bn} u + (-\Delta_{\bn})^s u - \lambda u = |u|^{2^*-2}u + |u|^{p-1}u, & u \in H^1(\bn),
\end{cases}
\tag{$\tilde{P}_{\lambda,s}$}\label{mainEq2}
\end{equation}
where the term $ |u|^{p-1}u $ acts as a lower-order perturbation of the critical power nonlinearity $ |u|^{2^*-2}u $. Our interest lies in analyzing the existence of solutions in the presence of the geometric and analytic complexities introduced by nonlocal structure in the hyperbolic space.

\smallskip
Semilinear elliptic equations of the type involving local operators such as the Laplacian have been extensively studied in the Euclidean setting. Foundational results concerning the existence, regularity, and qualitative properties of solutions to such problems trace back to classical works such as those by Brezis and Nirenberg, and have been developed further in various contexts (see, e.g., \cite{BrezisNirenberg1983, Struwe1984, Lions1985}). Parallel to these developments, there has been a growing interest in nonlocal problems governed by the fractional Laplacian, which naturally arises in models related to reaction-diffusion problems, quantum mechanics, image processing, and mathematical finance. Such equations are governed by integro-differential operators that reflect long-range interactions and lack the strong locality of classical Laplacian-type operators, leading to new analytical challenges and rich mathematical phenomena (see \cite{DiNezzaPalatucciValdinoci2012, ServadeiValdinoci2014, RosOton2014,ChenFelmerQuaas2019, FallValdinoci2022,servadei2015brezis,ByeonScalarField}). 

\smallskip
More recently, a growing body of research has focused on elliptic equations that combine local and nonlocal effects-so-called mixed-order or mixed-local-nonlocal problems, in relation to the possible existence of positive solutions for critical problems and in connection with possible optimizers of suitable Sobolev inequalities. For example, in \cite{biagi}, the authors investigate the Brezis–Nirenberg-type problem for a mixed local–nonlocal operator of the form
\begin{equation*}
-\Delta u + (-\Delta)^s u - \lambda u = |u|^{2^*-2} u \quad \text{in } \Omega,\
u = 0 \quad \text{in } \mathbb{R}^N \setminus \Omega,
\end{equation*}
where $\Omega \subset \mathbb{R}^N$ is a smooth bounded domain, $\lambda \in \mathbb{R}$ is a parameter, and $2^* = \frac{2N}{N-2}$ is the critical Sobolev exponent. Their study includes a detailed investigation of the associated Sobolev inequality, showing that the optimal constant is never attained. Furthermore, they establish existence and nonexistence results for the subcritical perturbation of the problem. (cf. \cite{VariationalForMixed, BonheureVanSchaftingen2020,FractionalValdinoci1,ValdinociMaximum,Garain1}).


\smallskip

There has also been a substantial shift toward studying elliptic equations on non-Euclidean spaces, particularly due to their applicability in modeling various physical and geometric phenomena. A central motivation in this direction is to understand how the curvature of the underlying space, especially the negative curvature of hyperbolic geometry, affects the existence, uniqueness, and qualitative behavior of solutions. Seminal contributions in this area include the work of Sandeep and Mancini \cite{MS, ManciniHardySobolev}, who investigated semilinear equations involving the Laplace–Beltrami operator on the hyperbolic space.   
In \cite{MS}, Sandeep and Mancini proved the existence and uniqueness (up to isometries) of finite-energy positive solutions to the homogeneous elliptic equation
\begin{equation}\label{hsm}
	-\Delta_{\mathbb{B}^{N}} u \, - \, \lambda u \, = \, u^{p}, \quad u \in H^{1}\left(\mathbb{B}^{N}\right),
\end{equation}
where $\lambda \leq \frac{(N-1)^2}{4},$ $1 < p \leq \frac{N+2}{N-2}$ if $N \geq 3; $ $1 < p < \infty$ if $N=2.$ They established that, in the subcritical case and for $p > 1$, equation \eqref{hsm} admits a positive solution if and only if $\lambda < \frac{(N-1)^2}{4}$, and that such solutions are unique modulo hyperbolic isometries except possibly when $N = 2$ and $\lambda > \frac{2(p+1)}{(p+3)^2}$. 

\smallskip

Following their work, subsequent studies have deepened the analysis of \eqref{hsm} in various directions. For instance, \cite{MousomiUTF, DG1, DG2} addressed the existence of sign-changing solutions, compactness properties, and non-degeneracy, while \cite{BFG, BGGV} explored infinite-energy solutions and their asymptotic profiles. Building upon this framework, the authors of this article, in collaboration with Ganguly, Bhakta, and Sahoo, have analyzed non-radial perturbations of \eqref{hsm} and contributed to the understanding of Palais-Smale decomposition, multiplicity, and asymptotic behavior of solutions (see \cite{GGS2, asympOur, OurCCM, OurPRSE, bhakta2024poincaresobolevequationscriticalexponent}).
Moreover, Sandeep and Mancini showed that equation \eqref{hsm} arises as the Euler–Lagrange equation of the Hardy–Sobolev–Maz'ya-type inequality in the hyperbolic space, which has led to extensive work on related inequalities and higher-order conformally covariant operators, including Paneitz-type operators studied in the hyperbolic setting (see \cite{Lu1, Lu2, PanietzHyperbolic}).

\smallskip

A natural extension of the above discussion involves extending the analysis to nonlocal problems in the hyperbolic space. The fractional Laplacian $\left(-\Delta_{\mathbb{B}^N}\right)^s$, for $0 < s < 1$, on the hyperbolic space $\mathbb{B}^N$ can be defined using the standard functional calculus. However, in \cite{BanicaEtalFractionalLaplacianHyperbolic}, the authors showed that $\left(-\Delta_{\mathbb{B}^N}\right)^s$ can be interpreted as the Dirichlet-to-Neumann operator associated with a degenerate elliptic extension problem, analogous to the Caffarelli–Silvestre extension for the Euclidean setting \cite{CafferelliSilvsitreEuclidean}, and its generalization to manifolds \cite{Caffsilmanifold}. In addition, they provided an explicit integral representation of the fractional Laplacian in the hyperbolic space as a principal value (P.V.) singular integral, expressed as a convolution with a well-behaved kernel $\mathcal{K}_s$ (up to a normalizing constant):
\begin{equation*}
\left(-\Delta_{\mathbb{B}^N}\right)^s u(x) = \text{P.V.} \int_{\mathbb{B}^N} (u(x) - u(\xi))\;\mathcal{K}_s(d(x, \xi))\;\mathrm{d}\xi,
\end{equation*}
where the kernel $\mathcal{K}_s(d(x, \xi))$ is comparable to $\frac{1}{d(x,\xi)^{N+2s}}$ as $d(x,\xi) \to 0$, and exhibits exponential decay as $d(x, \xi) \to \infty$ (see \cite[Theorems 2.4 and 2.5]{BanicaEtalFractionalLaplacianHyperbolic}; also see Section~\ref{PrelimSection}). Notably, as $d(x, \xi) \to 0$, the structure of this kernel is reminiscent of the Euclidean singular kernel $\frac{1}{|x - y|^{N+2s}}$ that appears in the classical definition of the fractional Laplacian.

\smallskip

Further refinements appear in \cite{harnackinequalityfractionallaplaciantype}, where the authors, focusing on the case $N=3$, determined the explicit value of the normalizing constant, which proves essential in certain applications. This representation was later extended to the fractional $p$-Laplacian in the hyperbolic space in \cite[Theorem 1.2]{fractionalplaplacianhyperbolicspaces}, where the authors incorporated the normalizing constant into the generalized formulation. A broader perspective on fractional Laplacians on manifolds is given in \cite{fracLaplClosedManifolds}, where several equivalent definitions are established and shown to coincide (up to multiplicative constants) on closed Riemannian manifolds (i.e., compact without boundary). These approaches often rely on the Fourier transform on hyperbolic space, which serves as a key analytical tool in \cite{BanicaEtalFractionalLaplacianHyperbolic, harnackinequalityfractionallaplaciantype}. However, in nonlinear settings \cite{fractionalplaplacianhyperbolicspaces} where the Fourier transform is unavailable, alternative techniques such as the heat kernel associated with the Laplace operator on $\mathbb{B}^N$ are employed.

\smallskip

These foundational results provide a rigorous framework for analyzing semilinear problems involving the fractional Laplacian on hyperbolic space. In particular, when the nonlinearity is derived from a double-well potential, one can establish the existence and uniqueness of layer solutions, along with certain symmetry properties, as shown in \cite{LayerSolutionMaria}. Additionally, in \cite{BinlinZhangManifold}, the authors addressed fractional $p$-Laplacian equations with homogeneous Dirichlet boundary conditions on compact Riemannian manifolds.

\smallskip
Motivated by the above developments, we turn our attention to nonlinear elliptic problems involving mixed-order local and nonlocal operators in the hyperbolic space. Our broader objective is to study the Brezis–Nirenberg problem involving a mixed local-nonlocal operator in the hyperbolic space. However, we observe a notable gap in the literature: while significant progress has been made for such problems in the Euclidean setting, the study of elliptic equations involving the fractional Laplacian in hyperbolic space remains largely unexplored. To bridge this gap, we initiate the investigation of such problems by considering the subcritical equations \eqref{mainEq} and \eqref{mainEq2}.   In our recent work \cite{gupta2024existencesymmetryregularityground}, we have examined the existence of positive solutions to a nonlinear Choquard equation involving the Green kernel of the fractional operator $(-\Delta_{\mathbb{B}^N})^{-\alpha/2}$ in hyperbolic space, where $\alpha \in (0, N)$ and $N \geq 3$.

\smallskip
A fundamental challenge in analyzing problems like \eqref{mainEq}, \eqref{mainEq2} arises from the loss of compactness, which persists even in the subcritical range. This is due to the fact that the Sobolev embedding $H^1(\mathbb{B}^N) \hookrightarrow L^p(\mathbb{B}^N)$ is non-compact for all $2 \leq p \leq \frac{2N}{N-2}$, a consequence of the infinite volume of $\mathbb{B}^N$ and the invariance under the hyperbolic isometry group. Unlike the Euclidean case, where dilation techniques can mitigate such noncompactness, such methods are ineffective in the hyperbolic setting because the conformal and isometric groups coincide \cite{RatcliffeBook}. Subcritical problems are, in general, more tractable in this framework, as the loss of compactness typically arises only along one profile, namely due to hyperbolic translations. In contrast, problems involving the critical Sobolev exponent exhibit more severe compactness issues: even local compactness fails. This phenomenon was rigorously analyzed by Bhakta and Sandeep in \cite{MousomiUTF}, who developed a Palais–Smale decomposition that revealed the concentration occurs along two distinct profiles- one corresponding to the hyperbolic bubble, i.e., solutions of \eqref{hsm} and the other along the localized Aubin-Talenti bubble, i.e., positive solutions of
\begin{equation*}
-\De V =  \abs{V}^{2^*-2}V,  \quad V\in D^{1,2}(\mathbb{R}^N). 
\end{equation*}

\smallskip

The problems \eqref{mainEq} and \eqref{mainEq2} can be analyzed using variational methods. To establish the existence of finite energy solutions, it is essential to examine the associated energy functionals $I$ and $J$, introduced in \eqref{EnergyFuncSubcritical} and \eqref{energyFunctionalCritical}, respectively. We now state the main results of this article.

Concerning the subcritical equation \eqref{mainEq}, we obtain the following existence and symmetry result for the set $\Lambda$ defined in \eqref{nehariManifoldSubcritical}:

\begin{theorem}\label{ExistCritPt}
(Existence and symmetry). Let $s\in(0,1)$ be fixed and $1<p<2^*-1,\;\lambda < \frac{(N-1)^2}{4}$. Then,
\begin{enumerate}
\item  there exists a minimizer $u$ of $I$ subject to $\Lambda$, i.e. $u \in \Lambda$ and $I(u)=c^*$.
\item every minimizer of $I$ subject to $\Lambda$ is a mountain pass weak solution of \eqref{mainEq}.
\item every minimizer of $I$ subject to $\Lambda$ is radially symmetric up to hyperbolic translations.
\end{enumerate}
\end{theorem}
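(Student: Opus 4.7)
My plan is to run a Nehari-manifold argument in $H^1(\bn)$, recover compactness modulo hyperbolic isometries via a concentration lemma, then identify the minimum with the mountain pass level and finally symmetrize. The ambient functional is
\[
I(u)=\tfrac12\int_{\bn}|\na u|^2\dvg+\tfrac12[u]_s^2-\tfrac{\la}{2}\int_{\bn}u^2\dvg-\tfrac{1}{p+1}\int_{\bn}|u|^{p+1}\dvg,
\]
and the hypothesis $\la<(N-1)^2/4$ together with the Poincaré--spectral bound makes the quadratic form $\|u\|_\la^2:=\int|\na u|^2+[u]_s^2-\la\int u^2$ an equivalent norm on $H^1(\bn)$ (the fractional piece only adds a nonnegative term). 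Standard Nehari bookkeeping then yields that $\Lambda$ is a $C^1$ manifold bounded away from $0$, that $I|_{\Lambda}$ is bounded below by a positive constant, and that $c^*=\inf_\Lambda I>0$ is well defined.

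For step (1), let $u_n\in\Lambda$ be a minimizing sequence. From $u_n\in\Lambda$ one has $\|u_n\|_\la^2=\int|u_n|^{p+1}$, so $(u_n)$ is bounded in $H^1(\bn)$ and $\int|u_n|^{p+1}$ is bounded away from zero. The Sobolev embedding $H^1(\bn)\hookrightarrow L^{p+1}(\bn)$ is non-compact owing to the infinite volume and the isometry invariance, so I would invoke a hyperbolic analogue of Lions' concentration lemma (as used in \cite{MS,MousomiUTF}): vanishing of $\sup_{x\in\bn}\int_{B(x,R)}|u_n|^2$ would force $u_n\to 0$ in $L^{p+1}$, contradicting the lower bound above. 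Hence there exist hyperbolic isometries (translations) $\tau_n$ and $\delta>0$ with $\int_{B(0,R)}|\tau_n u_n|^2\ge\delta$. Replacing $u_n$ by $\tau_n u_n$ (which leaves $I$ and $\Lambda$ invariant by isometric invariance of both $-\De_{\bn}$ and $(-\De_{\bn})^s$), Rellich on geodesic balls gives a nontrivial weak/a.e.\ limit $u$. To upgrade weak convergence to strong, I would use a Brezis--Lieb splitting argument: the residual $w_n=u_n-u$ is bounded, weakly null, and, if not strongly convergent, the Nehari decomposition forces $I(u_n)\ge I(u)+I(w_n)+o(1)$ with each piece $\ge c^*$, contradicting $I(u_n)\to c^*$ unless $w_n\to 0$. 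Thus $u$ is a minimizer.

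For step (2), the Lagrange multiplier argument on $\Lambda$ gives a multiplier $\mu$ with $I'(u)=\mu\Phi'(u)$ where $\Phi(v)=\langle I'(v),v\rangle$; testing against $u$ and using that $\Phi'(u)\cdot u=(1-p)\int|u|^{p+1}\ne0$ on $\Lambda$ forces $\mu=0$, so $I'(u)=0$ and $u$ solves \eqref{mainEq} weakly. To identify $c^*$ with the mountain pass level $c_{MP}$, I use the standard fibering observation: for $p>1$, the map $t\mapsto I(tv)$ on each ray has a unique positive maximum located exactly at the Nehari intersection, and every admissible mountain pass path $\gamma:[0,1]\to H^1(\bn)$ with $\gamma(0)=0$ and $I(\gamma(1))<0$ must cross $\Lambda$, so $c_{MP}\ge c^*$; conversely the explicit path $t\mapsto t\cdot T(u)\cdot u/\|u\|$ (with $T$ large) through the minimizer gives $c_{MP}\le c^*$.

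For step (3), I would invoke the hyperbolic symmetric decreasing rearrangement $u\mapsto u^\sharp$ centered at $0\in\bn$. The $L^{q}$ norms are preserved for every $q$; the local Dirichlet energy obeys the classical Polya--Szegő inequality in $\bn$ (Baernstein); and a Riesz-type rearrangement inequality applied to the double integral representation
\[
[u]_s^2=\iint_{\bn\times\bn}(u(x)-u(\xi))^2\,\mathcal{K}_s(d(x,\xi))\dvg\,\mathrm d\xi,
\]
with $\mathcal K_s$ a radially decreasing function of $d(x,\xi)$, yields $[u^\sharp]_s^2\le[u]_s^2$. Consequently $I(u^\sharp)\le I(u)$ with equality only when $u$ already agrees with a translate of its rearrangement; combined with minimality this forces $u$ to be radial around some point of $\bn$, i.e.\ radial up to a hyperbolic isometry. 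The main obstacle I anticipate is the strict form of the fractional Polya--Szegő/Riesz inequality on $\bn$: one has to rule out the equality case carefully, using the strict monotonicity of $\mathcal K_s$ and the fact that any minimizer is strictly positive (by a hyperbolic strong maximum principle for $-\De_{\bn}+(-\De_{\bn})^s+|\la|$, which also justifies replacing $u$ by $|u|$ at the outset of the symmetrization).
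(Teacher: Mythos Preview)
Your proposal is essentially correct, but the route you take for part (1) is genuinely different from the paper's. You recover compactness via a hyperbolic Lions concentration lemma plus isometry-invariance and then run a Brezis--Lieb splitting; the paper instead \emph{symmetrizes the minimizing sequence first}. Concretely, after replacing $u_n$ by $|u_n|$ and then by its Schwarz rearrangement $u_n^*$ (rescaled back onto $\Lambda$), the paper works entirely in $H^1_r(\bn)$, where the embedding $H^1_r(\bn)\hookrightarrow L^{p+1}(\bn)$ is compact for $2<p+1<2^*$ (their Lemma~\ref{radial Lemma}). This gives strong $L^{p+1}$-convergence directly, with no concentration argument needed; Ekeland's principle on $\Lambda\cap H^1_r$ then shows the Lagrange multiplier vanishes and the limit lies on $\Lambda$. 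Your approach has the advantage of being symmetrization-free (so it would survive for nonlinearities or operators lacking a P\'olya--Szeg\H{o} inequality), while the paper's approach is shorter and avoids the somewhat delicate splitting step. On that point: your sentence ``$I(u_n)\ge I(u)+I(w_n)+o(1)$ with each piece $\ge c^*$'' is too quick as stated---neither $u$ nor $w_n$ is a priori on $\Lambda$, so you must either project each onto $\Lambda$ with scaling factor $\le 1$, or first show $u$ is a free critical point (hence $u\in\Lambda$) and conclude via Fatou on $\int|u_n|^{p+1}$.

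Parts (2) and (3) match the paper closely. One caution on (3): you invoke a strong maximum principle to get strict positivity, but the paper only proves a \emph{weak} maximum principle (Lemma~\ref{weakMaxLemma}) and does not need more; the strict inequality $[u^*]_s^2<[u]_s^2$ for non-radial $u$ is obtained directly from Beckner's rearrangement theorem together with the strict monotonicity of $\mathcal K_s$ (Lemma~\ref{RadDecreasingLemma}), without appealing to positivity of the minimizer.
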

For the more delicate problem \eqref{mainEq2}, we derive the following existence result under a suitable compactness assumption.

\begin{theorem}\label{CriticalExponentMainThm}
Let $ 1 < p < 2^* - 1 $, and suppose there exists a nontrivial, nonnegative radial function $ u_0 \in H_r^1(\bn) \setminus \{0\} $, with $ u_0 \geq 0 $ almost everywhere in $ \bn $, such that
\begin{equation}\label{CompactnessAssumptionOnFunctional}
    \sup_{\zeta \geq 0} J(\zeta u_0) < \frac{1}{N} S_{\lambda,s}^{N/2},
\end{equation}
where
\begin{equation} \label{eq:SK}
    S_{\lambda,s} := \inf_{v \in H^1(\bn) \setminus \{0\}} S_{\lambda,s}(v),
\end{equation}
and for each $ v \in H^1(\bn) \setminus \{0\} $,
\begin{small}
\begin{align*} 
   & S_{\lambda,s}(v):= \notag \\
    &\frac{\displaystyle \int_{\bn}|\nabla_{\bn} v|^2\dvg- \lambda \int_{\bn}|v|^2\dvg\hspace{-0.2em}+\int_{\bn}\int_{\bn} |v(x) - v(y)|^2 {\mathcal{K}_s}(d(x,y)) \dvg(x)\dvg(y)}
    { \left( \displaystyle \int_{\bn} |v(x)|^{2^*} \dvg(x) \right)^{2/2^*}}.
\end{align*}
\end{small}
Then the problem \eqref{mainEq2} admits a nontrivial radial solution $ u \in H_r^1(\bn) $.
\end{theorem}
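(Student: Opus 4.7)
The plan is to obtain $u$ as a mountain pass critical point of $J$ restricted to the radial subspace $H_r^1(\bn)$, and then invoke the principle of symmetric criticality to promote it to a critical point on the full space $H^1(\bn)$, hence to a weak solution of \eqref{mainEq2}. The mountain pass geometry is standard: writing $\|\cdot\|_*^2$ for the combined quadratic form appearing in the numerator of $S_{\lambda,s}(v)$, the positivity $S_{\lambda,s}>0$ yields $J(u)\geq \alpha\|u\|_*^2$ for some $\alpha>0$ whenever $\|u\|_*$ is small; meanwhile $J(\zeta u_0)\to -\infty$ as $\zeta\to \infty$ because of the $L^{2^*}$-term, so there exists $\zeta_1>0$ with $J(\zeta_1 u_0)<0$. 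The straight-line path $t\mapsto t\zeta_1 u_0$ produces a mountain pass level $c$ with
\begin{equation*}
0<\alpha\leq c \leq \sup_{\zeta\geq 0} J(\zeta u_0) < \frac{1}{N} S_{\lambda,s}^{N/2}
\end{equation*}
by \eqref{CompactnessAssumptionOnFunctional}, and the mountain pass theorem delivers a Palais--Smale sequence $\{u_n\}\subset H_r^1(\bn)$ at level $c$.

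To pass to a limit, I would first prove boundedness: the combination $J(u_n)-\frac{1}{p+1}\langle J'(u_n),u_n\rangle$ cancels the subcritical term and leaves both remaining coefficients positive (since $2<p+1<2^*$), so using the implicit hypothesis $\la<(N-1)^2/4$ (needed for coercivity of $\|\cdot\|_*^2$ via the Poincar\'e inequality on $\bn$, and for $S_{\lambda,s}>0$) one deduces uniform control of $\|u_n\|_*$. Extract $u_n\rightharpoonup u$ in $H_r^1(\bn)$ with $u_n\to u$ almost everywhere and pass to the limit in $\langle J'(u_n),\varphi\rangle=o(1)$ for each radial test $\varphi$ to conclude that $u$ is a (possibly trivial) radial weak solution satisfying $\langle J'(u),u\rangle=0$, whence $J(u)\geq 0$.

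The decisive step is to show $u\not\equiv 0$ and $u_n\to u$ strongly. Put $w_n:=u_n-u$, so $w_n\rightharpoonup 0$. Hilbert-space orthogonality for the local quadratic part, a Brezis--Lieb type splitting for the Gagliardo-style seminorm attached to $\mathcal{K}_s$, and the classical Brezis--Lieb lemma for $L^{2^*}$ together yield $\|u_n\|_*^2=\|u\|_*^2+\|w_n\|_*^2+o(1)$ and $\int_\bn|u_n|^{2^*}\dvg=\int_\bn|u|^{2^*}\dvg+\int_\bn|w_n|^{2^*}\dvg+o(1)$. The compact embedding $H_r^1(\bn)\hookrightarrow L^{p+1}(\bn)$ for $2<p+1<2^*$ --- which holds on the radial subspace in the hyperbolic setting via a Strauss-type decay in polar coordinates --- gives $\int_\bn|w_n|^{p+1}\dvg\to 0$. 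Combined with $\langle J'(u_n),u_n\rangle=o(1)$ and $\langle J'(u),u\rangle=0$, this produces $\|w_n\|_*^2-\int_\bn|w_n|^{2^*}\dvg=o(1)$. Setting $A:=\lim\|w_n\|_*^2=\lim\int_\bn|w_n|^{2^*}\dvg$ and using the Sobolev-type inequality $\|w_n\|_*^2\geq S_{\lambda,s}\bigl(\int_\bn|w_n|^{2^*}\dvg\bigr)^{2/2^*}$ forces the dichotomy $A=0$ or $A\geq S_{\lambda,s}^{N/2}$. In the latter case, $c=J(u)+A/N\geq \frac{1}{N}S_{\lambda,s}^{N/2}$, contradicting \eqref{CompactnessAssumptionOnFunctional}; hence $A=0$, so $u_n\to u$ strongly, and $c>0$ forces $J(u)=c>0$, i.e.\ $u\not\equiv 0$.

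The main technical obstacle will be the Brezis--Lieb decomposition for the nonlocal seminorm: one must adapt the classical argument to the kernel $\mathcal{K}_s(d(x,y))$, which is comparable to $d(x,y)^{-(N+2s)}$ near the diagonal and decays exponentially at infinity. Pointwise almost-everywhere convergence of the increment $u_n(x)-u_n(y)$ on $\bn\times\bn$, together with a Fatou / dominated-convergence argument tailored to the asymptotics of $\mathcal{K}_s$, should suffice. A secondary obstacle is establishing the compact radial embedding in the subcritical range: this calls for a Strauss-type pointwise decay estimate for $H_r^1(\bn)$ in hyperbolic polar coordinates, exploiting the $\sinh^{N-1}r$ volume element to recover tightness at infinity.
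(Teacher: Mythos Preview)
Your argument is correct and follows the standard Brezis--Lieb concentration-compactness route: split $u_n=u+w_n$, establish the asymptotic orthogonality $\|u_n\|_*^2=\|u\|_*^2+\|w_n\|_*^2+o(1)$ and the corresponding $L^{2^*}$ splitting, and run the dichotomy on $A=\lim\|w_n\|_*^2$. This delivers more than the theorem asks---you obtain strong convergence of the Palais--Smale sequence---but at the cost of the technical obstacle you yourself flag, namely a Brezis--Lieb lemma for the Gagliardo-type seminorm with kernel $\mathcal{K}_s$.

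The paper's proof sidesteps this obstacle entirely. After extracting the weak limit $u_\infty$ and showing it solves \eqref{mainEq2}, the paper argues \emph{by contradiction}: if $u_\infty\equiv 0$, then the compact radial embedding kills the $L^{p+1}$ term of the \emph{full sequence} $u_j$, and one reads off directly from $\langle J'(u_j),u_j\rangle\to 0$ that $\|u_j\|_\lambda^2+[u_j]_s^2-\int_{\bn}|u_j|^{2^*}\dvg\to 0$. Passing to a subsequence with common limit $L$, the energy identity $m=L/N$ together with $m>0$ and the Sobolev inequality force $m\geq \frac{1}{N}S_{\lambda,s}^{N/2}$, contradicting the level estimate. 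No splitting of the nonlocal seminorm is ever needed, because under the hypothesis $u_\infty=0$ there is no ``$u$-part'' to separate out. The trade-off is that the paper does not conclude strong convergence of $\{u_j\}$---only that the weak limit is nontrivial---but this is all the theorem claims.
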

\noindent
\textbf{Main Novelty and Strategy of the Proofs.} Our analysis is grounded in the variational approach within the natural functional space $H^1(\mathbb{B}^N)$, motivated by the fact that this space embeds continuously into the fractional Sobolev space $H^s(\mathbb{B}^N)$ (see Lemma~\ref{1tosTheorem}). The space $H^s(\mathbb{B}^N)$, associated with the fractional hyperbolic Laplacian, admits various characterizations in the literature, including those based on Fourier analysis. In our work, we adopt the P.V. definition involving a well-structured radial kernel, as developed by Banica et al.~\cite{BanicaEtalFractionalLaplacianHyperbolic}. Leveraging the properties of this radially decreasing kernel (see Lemma~\ref{RadDecreasingLemma}), we establish the continuous embedding $H^1(\mathbb{B}^N) \hookrightarrow H^s(\mathbb{B}^N)$ in Lemma~\ref{1tosTheorem}, which, to the best of our knowledge, has not been previously documented in the literature. This embedding plays a central role in addressing the analytical difficulties introduced by the nonlocal term in problems \eqref{mainEq} and \eqref{mainEq2}. The noncompactness issues inherent in the hyperbolic setting are managed through carefully crafted variational techniques. Furthermore, the embedding result and the adopted P.V. framework allow us to construct a well-defined variational structure for the associated energy functionals. As an independent result, we also establish a weak maximum principle for the mixed local-nonlocal operator in the hyperbolic space (see Lemma~\ref{weakMaxLemma}). The broader question of a strong maximum principle for nonlocal operators in this setting remains an open problem worthy of future investigation.
\smallskip

To overcome the loss of compactness in the subcritical regime, we restrict our attention to radially symmetric functions and work within the subspace $H^1_r(\mathbb{B}^N)$, where compactness is recovered (refer Lemma~\ref{radial Lemma}). The key tool for reducing general functions to radial ones is Schwarz symmetrization, which preserves or improves the relevant functional inequalities. In the case of the problem \eqref{mainEq2}, involving critical-type nonlinearities, the main difficulty lies in the failure of the Palais-Smale condition for the associated energy functional $J$. Nonetheless, we successfully establish the existence of a nontrivial solution by employing a general mountain pass framework under a suitable energy threshold.

\smallskip
\textit{The article is organized as follows}: In Section~\ref{PrelimSection}, we introduce necessary geometric preliminaries and notational conventions, and present the proof of the continuous embedding $H^1(\mathbb{B}^N) \hookrightarrow H^s(\mathbb{B}^N)$. Section~\ref{subcrticalSection} is devoted to the proof of Theorem~\ref{ExistCritPt} for the subcritical case, while Section~\ref{criticalSection} contains the proof of Theorem~\ref{CriticalExponentMainThm} for the critical case. Finally, Section~\ref{appendixSection} provides auxiliary results, including the weak maximum principle and a key estimate used in the analysis. Numerous positive constants, whose specific values are irrelevant, are denoted by $C$.

\section{Preliminaries}\label{PrelimSection}
In this section, we will introduce some of the notations and definitions used in this
paper and also recall some of the embeddings
related to the Sobolev space on the hyperbolic space. 
\smallskip
\noindent
We will denote by $\bn$ the disc model of the hyperbolic space, i.e., the Euclidean unit ball $B(0,1):= \{x \in \mathbb{R}^N: |x|^2<1\}$ equipped with the Riemannian metric
\begin{align*}
	{\rm d}s^2 = \left(\frac{2}{1-|x|^2}\right)^2 \, {\rm d}x^2,
\end{align*}
where ${\rm d}x$ is the standard Euclidean metric and $|x|^2 = \sum_{i=1}^Nx_i^2$ is the standard Euclidean length. The corresponding volume element is given by $\mathrm{~d} V_{\mathbb{B}^{N}} = \big(\frac{2}{1-|x|^2}\big)^N {\rm d}x, $ where ${\rm d}x$ denotes the Lebesgue 
measure on $\mathbb{R}^N$. $\nabla_{\bn}$ and $\Delta_{\bn}$ denote gradient 
 vector field and Laplace-Beltrami operator, respectively, which take the form
\begin{align*} 
 \nabla_{\bn} = \left(\frac{1 - |x|^2}{2}\right)^2\nabla,  \quad 
 \Delta_{\bn} = \left(\frac{1 - |x|^2}{2}\right)^2 \Delta + (N - 2)\left(\frac{1 - |x|^2}{2}\right)  x \cdot \nabla,
\end{align*}
where $\nabla, \Delta$ are the standard Euclidean gradient vector field and Laplace operator, respectively, and '$\cdot$' denotes the 
standard inner product in $\mathbb{R}^N.$

 \smallskip 

\noindent

\subsection{M\"{o}bius Transformations and Convolution} \cite{LP}
\noindent
The M\"{o}bius transformations $T_a$ for each $a \in \bn$ are defined as follows: 
\begin{equation*}
T_a(x)=\frac{|x-a|^2 a-\left(1-|a|^2\right)(x-a)}{1-2 x \cdot a+|x|^2|a|^2} 
\end{equation*}
where $x \cdot a=x_1 a_1+x_2 a_2+\cdots+x_n a_n$ represents the scalar product in $\mathbb{R}^N$. The measure on $\bn$ is known to be invariant under M\"{o}bius transformations.
\subsection{Hyperbolic Distance on $\bn$} The distance between $x$ and $y$ in $\bn$ can be defined as follows utilizing the M\"obius transformations:
\begin{equation*}
\rho(x, y)=\rho\left(T_x(y)\right)=\rho\left(T_y(x)\right)=\log \frac{1+\left|T_y(x)\right|}{1-\left|T_y(x)\right|}
\end{equation*}
where $\rho(x)= d(x,0)= \log \frac{1+|x|}{1-|x|}$ is the geodesic distance from the origin.\\
As a result, a subset of $\bn$ is a hyperbolic sphere in $\bn$ if and only if it is a Euclidean sphere in $\mathbb{R}^N$ and contained in $\bn$, possibly 
with a different centre and different radius, which can be explicitly computed from the formula of $d(x,y)$ \cite{RatcliffeBook}.  Geodesic balls in $\bn$ of radius $r$ centred at $x \in \bn$ will be denoted by 
$$
B(x,r) : = \{ y \in \bn : d(x, y) < r \}.
$$
This section introduces the mathematical tools and definitions necessary for our analysis.

\subsection{Helgason Fourier Transform on the Hyperbolic Space}
We give a brief overview of the Helgason Fourier transform, which serves as a foundation for defining the fractional Laplacian in the hyperbolic space. For comprehensive treatments and further references, see \cite{BanicaEtalFractionalLaplacianHyperbolic, LP}.\\
Analogous to the Euclidean context, the Fourier transform can be defined as follows,
\begin{equation*}
\hat{f}(\beta, \theta)=\int_{\mathbb{B}^N} f(x) h_{\beta, \theta}(x) \dvg,
\end{equation*}
for $\beta \in \mathbb{R}, \theta \in \mathbb{S}^{N-1}$, where $h_{\beta, \theta}$ are the generalized eigenfunctions of the Laplace Beltrami operator that satisfy
\begin{equation*}
\Delta_{\mathbb{B}^N} h_{\beta, \theta}=-\left(\beta^2+\frac{(N-1)^2}{4}\right) h_{\beta, \theta} .
\end{equation*}
Furthermore, given $f \in C_0^\infty(\bn)$, the following inversion formula is valid:
\begin{equation*}
f(x)=\int_{-\infty}^{\infty} \int_{\mathbb{S}^{N-1}} \bar{h}_{\beta, \theta}(x) \hat{f}(\beta, \theta) \frac{\mathrm{d} \beta\mathrm{d} \theta}{|c(\beta)|^2},
\end{equation*}
where  $c(\beta)$ represents the Harish-Chandra coefficient:
\begin{equation*}
\frac{1}{|c(\beta)|^2}=\frac{1}{2} \frac{\left|\Gamma\left(\frac{N-1}{2}\right)\right|^2}{|\Gamma(N-1)|^2} \frac{\left\lvert\, \Gamma\left(i \beta+\left.\left(\frac{N-1}{2}\right)\right|^2\right.\right.}{|\Gamma(i \beta)|^2} .
\end{equation*} 
Moreover, the following Plancherel formula holds
\begin{equation}
\int_{\mathbb{B}^N}|f(x)|^2 \dvg=\int_{\mathbb{R} \times \mathbb{S}^{N-1}}|\hat{f}(\beta, \theta)|^2 \frac{\mathrm{d} \beta\mathrm{d} \theta}{|c(\beta)|^2}. \label{plancheralFormulaEqn}
\end{equation}

\noindent
It is straightforward to verify through integration by parts for compactly supported functions, and thus for every $f \in L^2(\bn)$ that
\begin{equation*}
\begin{aligned}
\widehat{\Delta_{\mathbb{B}^N} f}(\beta, \theta) & =\int_{\mathbb{B}^N} \Delta_{\mathbb{B}^N} f(x) h_{\beta, \theta}(x) \dvg(x) =\int_{\mathbb{B}^N} f(x) \Delta_{\mathbb{B}^N} h_{\beta, \theta}(x) \dvg(x) \\
& =-\left(\beta^2+\frac{(N-1)^2}{4}\right) \hat{f}(\beta, \theta) .
\end{aligned} 
\end{equation*}

\noindent
Given the theory discussed above, the fractional Laplacian on the hyperbolic space can be defined as  ${\left(-\Delta_{\mathbb{B}^N}\right)}^{s} f$, which is the operator satisfying
\begin{equation}
(-\widehat{\left.\Delta_{\mathbb{B}^N}\right)^s} f=\left(\beta^2+\frac{(N-1)^2}{4}\right)^{s} \hat{f}, \quad s \in \mathbb R. \label{FourierTranformDefn}
\end{equation}
For $N \geq 2$ and $s \in (0,1)$, the fractional Laplacian admits the following pointwise representation:

\begin{equation}
\left(-\Delta_{\mathbb{B}^N}\right)^s u(x)= \text { P.V. } \int_{\mathbb{B}^N}(u(x)-u(\xi)) \mathcal{K}_{s}(d(x, \xi)) \mathrm{d} \xi \label{PVdefintion}
\end{equation}
where $\rho := d(x, \xi)$ denotes the hyperbolic distance between the points $x$ and $\xi$, and the kernel $\mathcal{K}_s(\rho)$ is given explicitly as follows (see \cite[Theorem 1.2]{fractionalplaplacianhyperbolicspaces}, \cite{BanicaEtalFractionalLaplacianHyperbolic}):\\

\noindent
when $N \geq 3$ is odd 
\begin{equation*}
\mathcal{K}_{s}(\rho)=C(N,s)\left(\frac{-\partial_\rho}{\sinh \rho}\right)^{\frac{N-1}{2}}\left(\rho^{-\frac{1+2s }{2}} K_{\frac{1+2s }{2}}\left(\frac{N-1}{2} \rho\right)\right),
\end{equation*}

\noindent
and when $N \geq 2$ is even
\begin{equation*}
\mathcal{K}_{s}(\rho)=C(N,s) \int_\rho^{\infty} \frac{\sinh r}{\sqrt{\pi} \sqrt{\cosh r-\cosh \rho}}\left(\frac{-\partial_r}{\sinh r}\right)^{\frac{N}{2}}\left(r^{-\frac{1+2s}{2}} K_{\frac{1+2s}{2}}\left(\frac{N-1}{2} r\right)\right) \mathrm{d}r,
\end{equation*}
\noindent
where $K_\nu$ denotes the modified Bessel function of the second kind, and the constant $C(N, s)$ is given by
\noindent
\begin{equation*}
C(N, s)= \frac{\sqrt{\pi}\;2^{2 s}\;\Gamma\left(\frac{N+2s}{2}\right)}{2\Gamma(\frac{3}{2})\pi^{\frac{N}{2}}|\Gamma(-s)|}\frac{1}{2^{\frac{N-2+2s}{2}} \Gamma\left(\frac{N+2s }{2}\right)}\left(\frac{N-1}{2}\right)^{\frac{1+2s}{2}}. \label{fractionalPVdefnConstant}
\end{equation*}

\medskip
\subsection{ A sharp Poincar\'{e}-Sobolev inequality} (see \cite{MS})

\medskip
\noindent
{\bf Sobolev Space:} We will denote by ${H^{1}}(\bn)$ the Sobolev space on the disc
model of the hyperbolic space $\bn$, equipped with norm $\|u\|=\left(\int_{\mathbb{B}^N} |\nabla_{\mathbb{B}^{N}} u|^{2}\right)^{\frac{1}{2}},$
where  $|\nabla_{\bn} u| $ is given by
$|\nabla_{\bn} u| := \langle \nabla_{\bn} u, \nabla_{\bn} u \rangle^{\frac{1}{2}}_{\bn} .$ 

\noindent
For $N \geq 3$ and every $p \in \left(1, \frac{N+2}{N-2} \right]$ there exists an optimal constant 
$S_{\lambda,p} > 0$ such that
\begin{equation}
	S_{\lambda,p} \left( \int_{\mathbb{B}^{N}} |u|^{p + 1} \mathrm{~d} V_{\mathbb{B}^{N}} \right)^{\frac{2}{p + 1}} 
	\leq \int_{\mathbb{B}^N} \left[|\nabla_{\mathbb{B}^{N}} u|^{2}
	- \frac{(N-1)^2}{4} u^{2}\right] \, \mathrm{~d} V_{\mathbb{B}^{N}}, \label{PoinSobIneq}
\end{equation}
for every $u \in C^{\infty}_{0}(\mathbb{B}^{N}).$ If $ N = 2$, then any $p > 1$ is allowed.

\noindent
 A crucial point to note is that the bottom of the spectrum of $- \Delta_{\bn}$ on $\bn$ is 
\begin{equation}\label{firsteigen}
	\frac{(N-1)^2}{4} = \inf_{u \in H^{1}(\bn)\setminus \{ 0 \}} 
	\dfrac{\int_{\bn}|\nabla_{\bn} u|^2 \, \mathrm{~d} V_{\mathbb{B}^{N}} }{\int_{\bn} |u|^2 \, \mathrm{~d} V_{\mathbb{B}^{N}}}. 
\end{equation}

\begin{remark}
	As a result of \eqref{firsteigen}, if $\lambda < \frac{(N-1)^2}{4},$ then 
\begin{equation*}
 \left\|u\right\|_{\lambda} := \left[ \int_{\bn} \left( |\nabla_{\bn} u|^2 - \lambda \, u^2 \right) \, \mathrm{~d} V_{\mathbb{B}^{N}} \right]^{\frac{1}{2}}, \quad u \in C_0^{\infty}(\bn)
\end{equation*}
	is a norm, equivalent to the $H^1(\bn)$ norm. When $\lambda = \frac{(N-1)^2}{4}$, the sharp Poincaré inequality \eqref{PoinSobIneq} ensures that $\|u\|_{\frac{(N-1)^2}{4}}$ is also a norm on $C_0^{\infty}(\bn)$.
\end{remark}
\noindent
For $\lambda \leq \frac{(N-1)^2}{4}$, denote by $\mathcal{H}_\lambda(\bn)$ the completion of $C_0^{\infty}(\bn)$ with respect to the norm $\|u\|_\lambda$. The associated inner product is denoted by $\langle \cdot, \cdot\rangle_{{\lambda}}.$ Note that
\begin{equation}
S_{\lambda, p}\left(\int|u|^{p+1} \, d V_{\bn}\right)^{\frac{2}{p+1}} \leq \|u\|^2_\lambda, \quad p \in\left(1, \frac{N+2}{N-2}\right] \quad \text{for} \quad u \in \mathcal{H}_\lambda(\bn). \label{SubcritPoincare}
\end{equation}
Additionally, throughout this article, $\|\cdot\|_{r}$ denotes the $L^r$-norm with respect to the volume measure for $1 \leq r \leq \infty.$
\medskip
\subsection{Fractional Sobolev Spaces}

There exist multiple definitions of Sobolev spaces on the hyperbolic space, depending on the underlying analytical framework. In this work, we shall employ the following definition for the fractional Sobolev space $H^s(\bn)$ with $0 < s < 1$:
\begin{equation*}
    H^s(\bn) = \left\{ f \in L^2(\bn) \mid \|f\|_{L^2(\bn)} + \left[f\right]_s^2< \infty \right\}.
\end{equation*}
where the seminorm $[f]_s$ is defined by
$$
[f]_s := \left( \int_{\bn} \int_{\bn} |f(x) - f(y)|^2 \mathcal{K}_s(d(x,y)) \dvg(x) \dvg(y)\right)^{\frac{1}{2}}.
$$
The full norm is then defined as
$$
\|f\|_{H^{s}(\bn)} := \left( \int_{\bn} |f(x)|^2 \dvg(x) + [f]_{s}^2 \right)^{\frac{1}{2}}.
$$
This space serves as an intermediary Banach space between $L^2(\bn)$ and $H^1(\bn)$, endowed with the natural norm defined above. In what follows, we establish the continuous embedding stated in \eqref{1tosEmbedding}. As a key step towards this, we first prove an important auxiliary result concerning the behavior of the kernel $\mathcal{K}_s$.

\begin{lemma}\label{RadDecreasingLemma}
Let $0 < s < 1$. Then the kernel $\mathcal{K}_s$, defined in \eqref{PVdefintion}, is positive and strictly decreasing with respect to the geodesic distance $\rho$.
\end{lemma}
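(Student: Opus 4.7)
The plan is to work directly with the explicit formulas for $\mathcal{K}_s$ recalled just before the lemma, relying on the classical identities for the modified Bessel function $K_\nu$ together with the observation that the operator $\frac{-\partial_\rho}{\sinh\rho}$ simplifies drastically under the change of variable $t = \cosh\rho$. Indeed, since $dt = \sinh\rho\,d\rho$, one has $\frac{-\partial_\rho}{\sinh\rho} = -\partial_t$ when acting on radial functions of $\rho$. After this change of variable, both positivity of $\mathcal{K}_s$ and its strict decrease in $\rho$ reduce to the complete monotonicity, in the variable $t\in(1,\infty)$, of a single auxiliary function.

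I would first treat the odd-dimensional case $N=2k+1$. Setting $\nu = (1+2s)/2$ and $a = (N-1)/2$, write the inner function as $\phi(\rho) = \rho^{-\nu}K_\nu(a\rho)$, so the formula for $\mathcal{K}_s$ becomes $\mathcal{K}_s(\rho) = C(N,s)\,(-\partial_t)^k \tilde\phi(t)$, where $\tilde\phi(t) := \phi(\mathrm{arccosh}\,t)$. Using the classical representation
\begin{equation*}
K_\nu(z) = \frac{\sqrt{\pi}(z/2)^\nu}{\Gamma(\nu+\tfrac12)}\int_1^\infty e^{-zu}(u^2-1)^{\nu-\tfrac12}\,du,\qquad \nu > -\tfrac12,
\end{equation*}
together with the identity $e^{-a\rho u}=(t+\sqrt{t^2-1})^{-au}$, one obtains
\begin{equation*}
\tilde\phi(t) = \frac{\sqrt{\pi}(a/2)^\nu}{\Gamma(\nu+\tfrac12)}\int_1^\infty (t+\sqrt{t^2-1})^{-au}(u^2-1)^{s-\tfrac12}\,du.
\end{equation*}
The crucial point is that $t\mapsto (t+\sqrt{t^2-1})^{-\mu} = e^{-\mu\,\mathrm{arccosh}(t)}$ is completely monotonic on $(1,\infty)$ for every $\mu>0$, because $\mathrm{arccosh}(t)$ is a Bernstein function there: its derivative $(t^2-1)^{-1/2}=(t-1)^{-1/2}(t+1)^{-1/2}$ is a product of two completely monotonic factors, hence completely monotonic itself. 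Integrating against the positive weight $(u^2-1)^{s-1/2}\,du$ preserves complete monotonicity, so $\tilde\phi$ is completely monotonic on $(1,\infty)$. Consequently $(-\partial_t)^k\tilde\phi>0$ (giving $\mathcal{K}_s>0$) and $(-\partial_t)^{k+1}\tilde\phi>0$ (giving $-\partial_t\mathcal{K}_s>0$, i.e.\ strict decrease in $t$, hence in $\rho$ since $t=\cosh\rho$ is strictly increasing).

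For $N=2k$ even, the representation reads $\mathcal{K}_s(\rho)=C(N,s)\int_\rho^\infty \frac{\sinh r}{\sqrt{\pi}\sqrt{\cosh r-\cosh\rho}}\,F(r)\,dr$ with $F(r)=\bigl(\frac{-\partial_r}{\sinh r}\bigr)^k[r^{-\nu}K_\nu(ar)]$. The function $F$ is positive and strictly decreasing by the argument of the previous paragraph (complete monotonicity provides one more derivative than needed). Performing the substitution $u=\cosh r-\cosh\rho$ yields
\begin{equation*}
\mathcal{K}_s(\rho)=\frac{C(N,s)}{\sqrt{\pi}}\int_0^\infty \frac{F(\mathrm{arccosh}(u+\cosh\rho))}{\sqrt{u}}\,du,
\end{equation*}
which is manifestly positive; differentiating in $\rho$ under the integral and using $F'<0$ together with $\partial_\rho \mathrm{arccosh}(u+\cosh\rho)=\sinh\rho/\sinh r>0$ gives strict decrease. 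Since $C(N,s)>0$ in both parities, the lemma follows. The main technical point, which I would isolate as a self-contained claim, is the complete monotonicity of $t\mapsto(t+\sqrt{t^2-1})^{-\mu}$ on $(1,\infty)$; this is the principal obstacle and is handled cleanly via the composition rule for Bernstein and completely monotonic functions combined with the elementary factorization $(t^2-1)^{-1/2}=(t-1)^{-1/2}(t+1)^{-1/2}$.
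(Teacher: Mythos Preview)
Your argument is correct and in fact more self-contained than the paper's. The paper differentiates $\mathcal{K}_s$ directly: in odd dimensions it writes $\partial_\rho\mathcal{K}_s(\rho)=C(N,s)(-\sinh\rho)\left(\frac{-\partial_\rho}{\sinh\rho}\right)^{(N+1)/2}\mathscr{K}_{\nu,a}(\rho)$ and then cites \cite[Lemma~5.1]{fractionalplaplacianhyperbolicspaces} for the positivity of this higher iterate; in even dimensions it uses a recursion identity \cite[Lemma~3.1]{fractionalplaplacianhyperbolicspaces} together with \cite[Theorem~5.1]{fractionalplaplacianhyperbolicspaces}. Your substitution $t=\cosh\rho$, which turns $\frac{-\partial_\rho}{\sinh\rho}$ into $-\partial_t$, reduces both parities to the complete monotonicity of $\tilde\phi$ on $(1,\infty)$, established directly from the integral representation of $K_\nu$ and the Bernstein property of $\mathrm{arccosh}$ (via the factorization $(t^2-1)^{-1/2}=(t-1)^{-1/2}(t+1)^{-1/2}$). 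This delivers positivity of \emph{all} iterates of $-\partial_t$ at once---so the external lemmas the paper cites become corollaries of your argument---at the modest cost of invoking the composition rule for completely monotonic and Bernstein functions. Two small points: since $\nu=\frac{1+2s}{2}$, the weight in your integral should be $(u^2-1)^{\nu-1/2}=(u^2-1)^{s}$, not $(u^2-1)^{s-1/2}$ (harmless, as it remains positive); and the differentiation under the integral sign in the even case deserves a sentence of justification, though the exponential decay of $K_\nu$ at infinity makes this routine.
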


\begin{proof}
The positivity of the kernel $\mathcal{K}_s$ follows from \cite[Corollary 5.2]{fractionalplaplacianhyperbolicspaces}. For notational convenience, define
\begin{equation*}
\mathscr{K}_{\nu,a}(\rho) := \rho^{-\nu} K_\nu(a\rho),
\end{equation*}
where $K_\nu$ is the modified Bessel function of the second kind, with parameters $\nu \in \mathbb{R}$ and $a > 0$.

\medskip
\noindent
\textbf{Case 1: $N \geq 3$ odd.} In this case, the kernel admits the explicit formula
$$
\mathcal{K}_s(\rho) = C(N,s) \left( \frac{-\partial_\rho}{\sinh \rho} \right)^{\frac{N-1}{2}} K_{\frac{1+2s }{2}}\left(\frac{N-1}{2} \rho\right),
$$
for a constant $C(N,s) > 0$ defined in \eqref{fractionalPVdefnConstant}. Differentiating this expression, we get
\begin{align*}
\frac{d}{d\rho} \mathcal{K}_s(\rho) 
&= C(N,s) \frac{d}{d\rho} \left( \left( \frac{-\partial_\rho}{\sinh \rho} \right)^{\frac{N-1}{2}} \mathscr{K}_{\frac{1+2s}{2}, \frac{N-1}{2}}(\rho) \right) \\
&=C(N,s)(-\sinh \rho)\frac{-1}{\sinh \rho}\frac{\partial}{\partial \rho}\left[-\frac{1}{\sinh \rho} \frac{\partial}{\partial \rho}\right]^{\frac{N-1}{2}}\mathscr{K}_{\frac{1+2s}{2}, \frac{N-1}{2}}(\rho)\\
 &= C(N,s)(-\sinh \rho)\left(\frac{-\partial_\rho}{\sinh \rho}\right)^{\frac{N+1}{2}}\mathscr{K}_{\frac{1+2s}{2}, \frac{N-1}{2}}(\rho)<0,
\end{align*}
where the inequality in the final step follows from \cite[Lemma 5.1]{fractionalplaplacianhyperbolicspaces}.

\medskip
\noindent
\textbf{Case 2: $N \geq 2$ even.} In this case, the kernel is represented by the integral
\begin{align*}
     \mathcal{K}_{s}(\rho)&=C(N,s) \int_\rho^{\infty} \frac{\sinh r}{\sqrt{\pi} \sqrt{\cosh r-\cosh \rho}}\left(\frac{-\partial_r}{\sinh r}\right)^{\frac{N}{2}}\left(r^{-\frac{1+2s}{2}} K_{\frac{1+2s}{2}}\left(\frac{N-1}{2} r\right)\right) \mathrm{d} r\\
     &= C(N,s) \int_\rho^{\infty} \frac{\sinh r}{\sqrt{\pi} \sqrt{\cosh r-\cosh \rho}}\left(\frac{-\partial_r}{\sinh r}\right)^{\frac{N}{2}}\mathscr{K}_{\frac{1+2s}{2},\frac{N-1}{2}}(r)\mathrm{d} r 
     \end{align*}
Now define, for $m \in \mathbb{N} \cup \{0\}$,
$$
F_m(r) := \frac{\sinh r}{\sqrt{\cosh r - \cosh \rho}} \left( \frac{-\partial_r}{\sinh r} \right)^m \mathscr{K}_{\nu,a}(r),
$$
where $\nu >1/2$ and $a \geq 1/2$. With this notation, the kernel becomes
$$
\mathcal{K}_s(\rho) = \frac{C(N,s)}{\sqrt{\pi}} \int_\rho^{\infty} F_{\frac{N}{2}}(r) \, dr,
$$
for $\nu = \frac{1+2s}{2}$ and $a = \frac{N-1}{2}$.
According to \cite[Lemma 3.1]{fractionalplaplacianhyperbolicspaces}, $F_m$ is integrable on $(\rho, \infty)$ and satisfies the recursive identity
$$
\left( \frac{-\partial_\rho}{\sinh \rho} \right) \int_\rho^{\infty} F_m(r) \, dr = \int_\rho^{\infty} F_{m+1}(r) \, dr.
$$
Using this identity, we compute
\begin{align*}
\frac{d}{d\rho} \mathcal{K}_s(\rho)
&= \frac{C(N,s)}{\sqrt{\pi}} \frac{d}{d\rho} \int_\rho^{\infty} F_{\frac{N}{2}}(r) \, dr \\
&= \frac{C(N,s)}{\sqrt{\pi}} (-\sinh \rho) \int_\rho^{\infty} F_{\frac{N}{2}+1}(r) \, dr<0.
\end{align*}
The inequality again follows from \cite[Theorem 5.1]{fractionalplaplacianhyperbolicspaces}, which ensures that the integrand $F_{\frac{N}{2}+1}(r)$ is strictly positive. This confirms that $\mathcal{K}_s$ is strictly decreasing in $\rho$.
\end{proof}

\medskip

\begin{theorem}\label{1tosTheorem}
Let $0 < s < 1$. Then for all $f \in H^1(\bn)$,
$$
\left[f\right]_s^2\lesssim \int_{\bn}|\nabla_{\bn}f|^2\dvg.
$$
In particular, keeping in mind \eqref{firsteigen}, we obtain the following continuous embedding
\begin{equation} \label{1tosEmbedding}
    H^1(\bn) \hookrightarrow H^s(\bn)\;\;\;\text{for } 0 < s < 1.
\end{equation}
\end{theorem}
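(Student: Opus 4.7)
The plan is to transit to the Helgason–Fourier side, where the comparison between $[f]_s^2$ and $\|\nabla_{\bn} f\|_{L^2}^2$ reduces to an elementary scalar inequality in the spectral parameter. By density of $C_0^\infty(\bn)$ in $H^1(\bn)$, it suffices to establish the bound for $f \in C_0^\infty(\bn)$ and then extend by a standard Cauchy-sequence argument. The first step is the bilinear identity
$$
    [f]_s^2 = 2 \int_{\bn} f(x) \, (-\Delta_{\bn})^s f(x) \, \dvg(x),
$$
which I derive from the P.V.~definition \eqref{PVdefintion} by symmetrizing the integrand $f(x)(f(x)-f(y))\mathcal{K}_s(d(x,y))$ under the swap $x \leftrightarrow y$ (the kernel is symmetric in $(x,y)$ since it depends only on $d(x,y)$). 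Smoothness and compact support give $|f(x)-f(y)|^2 \lesssim d(x,y)^2$ locally, which, combined with the near-diagonal behavior $\mathcal{K}_s(\rho) \sim \rho^{-N-2s}$ and the exponential decay at infinity (both consistent with the monotone decreasing behavior from Lemma \ref{RadDecreasingLemma}), makes the double integral defining $[f]_s^2$ absolutely convergent.

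Next, I apply the Plancherel formula \eqref{plancheralFormulaEqn} together with the Fourier characterization \eqref{FourierTranformDefn} of $(-\Delta_{\bn})^s$ to obtain
$$
    \int_{\bn} f \, (-\Delta_{\bn})^s f \, \dvg = \int_{\mathbb{R} \times \mathbb{S}^{N-1}} \left(\beta^2 + \frac{(N-1)^2}{4}\right)^s |\hat f(\beta,\theta)|^2 \frac{d\beta\, d\theta}{|c(\beta)|^2},
$$
while the analogous identity with exponent $1$ in place of $s$ represents $\|\nabla_{\bn} f\|_{L^2}^2$. Since $\beta^2 + (N-1)^2/4 \geq (N-1)^2/4 > 0$ for $N \geq 2$, the elementary inequality $x^s \leq a^{s-1} x$ valid for $x \geq a > 0$ and $0 < s < 1$, applied with $a = (N-1)^2/4$, yields
$$
\left(\beta^2 + \frac{(N-1)^2}{4}\right)^s \leq \left(\frac{(N-1)^2}{4}\right)^{s-1}\left(\beta^2 + \frac{(N-1)^2}{4}\right).
$$
Integrating against $|\hat f|^2 \, d\beta \, d\theta / |c(\beta)|^2$ gives $[f]_s^2 \leq 2\,((N-1)^2/4)^{s-1} \|\nabla_{\bn} f\|_{L^2}^2$, and the continuous embedding $H^1(\bn) \hookrightarrow H^s(\bn)$ then follows by combining this with the Poincaré-type identity \eqref{firsteigen}, which controls $\|f\|_{L^2}$ by $\|\nabla_{\bn} f\|_{L^2}$.

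The main obstacle is the rigorous bookkeeping in the P.V./Fubini step: the integral $\int (f(x)-f(y)) \mathcal{K}_s(d(x,y)) dy$ is not absolutely convergent due to the singularity on the diagonal, so one must truncate to $\{d(x,y) > \epsilon\}$, symmetrize there, and pass to the limit as $\epsilon \to 0$ using the local estimate $|f(x)-f(y)|^2 \mathcal{K}_s(d(x,y)) \lesssim d(x,y)^{2-N-2s}$, which is locally integrable precisely because $s < 1$. All other steps are routine verifications, and the resulting constant depends only on $N$ and $s$, as expected.
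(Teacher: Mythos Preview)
Your proof is correct and takes a genuinely different route from the paper's main argument. The paper proves the bound directly at the level of the P.V.\ double integral: it splits $\iint |f(x)-f(y)|^2\,\mathcal{K}_s(d(x,y))\,dV\,dV$ into three regions according to whether $d(x,y)$ is small, large, or intermediate, handling the near-diagonal piece by a mean-value/geodesic estimate and a Jacobian bound for the map $y\mapsto\gamma_{x,y}(t)$, the far piece by the kernel's exponential decay, and the annular piece by the monotonicity of $\mathcal{K}_s$ from Lemma~\ref{RadDecreasingLemma}. Your spectral argument---symmetrize to get $[f]_s^2=2\int f(-\Delta_{\bn})^s f$, transfer to the Helgason--Fourier side via \eqref{plancheralFormulaEqn}--\eqref{FourierTranformDefn}, and apply $x^s\le a^{s-1}x$ with $a=(N-1)^2/4$---is in fact exactly the approach the paper records in the Remark immediately following the theorem (and your scaling by $a^{s-1}$ is a mild refinement over the Remark, which uses $x^s\le x$ and thus needs $N\ge 3$). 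The trade-off is that the paper's geometric proof works entirely within the P.V.\ framework and does not invoke the equivalence of the Fourier and singular-integral definitions of $(-\Delta_{\bn})^s$, making it more portable to nonlinear settings (e.g.\ the fractional $p$-Laplacian) where no Fourier calculus is available; your approach is shorter, yields an explicit constant, and makes the dependence on $s$ transparent, but relies on that equivalence as a black box.
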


\begin{proof} The kernel $\mathcal{K}_s$ is positive, and the asymptotic behavior of $\mathcal{K}_s$, as described in \cite[Theorem 1.2]{fractionalplaplacianhyperbolicspaces}, \cite{BanicaEtalFractionalLaplacianHyperbolic}, guarantees the the existence of constants $l, m > 0$ such that
$$
 \mathcal{K}_{s}(\rho) \lesssim \rho^{-n-2s} \;\;\text{for}\;\; \rho < l,$$ and $$\mathcal{K}_{s}(\rho) \lesssim \rho^{-1-s} e^{-(n-1)\rho} \;\;\text{ for } \rho > m.$$

\noindent
Now consider
\begin{align*}
    &\int_{\bn}\int_{\bn}(f(x)-f(y))^2 \mathcal{K}_{s}(d(x,y))\dvg(x)\dvg(y)\\
&=\underbrace{\int_{\bn}\int_{B(y,l)}}_{I_1}+\underbrace{\int_{\bn}\int_{B(y,m)^{\complement}}}_{I_2}+ \underbrace{\int_{\bn}\int_{B(y,m)\setminus B(y,l)}}_{I_3}.
\end{align*}
First, consider the integral $ I_1 $, where for each pair $ x, y \in \bn $, we denote by $ \gamma_{x,y} : [0,1] \to \bn $ the minimizing geodesic joining $ x $ to $ y $. Then, we have
\begin{align*}
    I_1&:=\int_{\bn}\int_{B(y,l)}(f(x)-f(y))^2 \mathcal{K}_{s}(d(x,y))\dvg(x)\dvg(y)\\
    &\leq C \int_{\bn}\int_{B(y,l)}\int_0^1 \frac{|\nabla_{\bn} f(\gamma_{x,y}(t))|^2}{(d(x,y))^{N+2s-2}}dt\dvg(x)\dvg(y) 
   \end{align*}

\medskip
\noindent
We now exploit the symmetry of the integral. Observe that $ 0 \leq t \leq 1 $, the identity $ \gamma_{x,y}(t) = \gamma_{y,x}(1-t) $ holds. Using the change of variables $ (x, y, t) \mapsto (y, x, 1 - t) $, we deduce that the integral over $ t \in [0,1] $ can be symmetrically rewritten over $ t \in [\frac{1}{2},1] $, yielding
\begin{align*}
    I_1\leq \;& C\int_{\bn}\int_{B(y,l)}\int_0^1 \frac{|\nabla_{\bn}f(\gamma_{x,y}(t))|^2}{(d(x,y))^{N+2s-2}}dt\dvg(x)\dvg(y) \\
    & = 2C\int_{\bn}\int_{\bn}\int_{\frac{1}{2}}^1 \frac{|\nabla_{\bn}f(\gamma_{x,y}(t))|^2 \mathbf{1}_{B(y, l)}(x)}{(d(x,y))^{N+2s-2}}dt\dvg(x)\dvg(y).
\end{align*}
Now, employing \cite[Theorem 3.3.9]{SC} (see also \cite{SC2, Ch}), we obtain a uniform lower bound on the Jacobian $ J_{x,t}(y) $ of the geodesic map
\begin{equation}
\Phi_{x, t}: y \mapsto \gamma_{x, y}(t) \label{ChangeOfVariables}
\end{equation}
\noindent
valid for all $ x, y \in \bn $ with $ d(x,y) < l $ and all $ t \in [\frac{1}{2},1] $. Specifically, we have
\begin{equation*}
J_{x, t}(y) \geq 1 / F(l), 
\end{equation*} 
where $ F(l) $ is some positive function. Using this lower bound, we estimate
\begin{align*}
    I_1&\leq \frac{|\nabla_{\bn} f(\gamma_{x,y}(t))|^2 \mathbf{1}_{B(y, l)}(x)}{(d(x,y))^{N+2s-2}}dt\dvg(x)\dvg(y) \\
    &\leq 2C F(l)\int_{\bn}\int_{\bn}\int_{\frac{1}{2}}^1 \frac{|\nabla_{\bn} f(\gamma_{x,y}(t))|^2 \mathbf{1}_{B(y, l)}(x)J_{x, t}(y) }{(d(x,y))^{N+2s-2}}dt\dvg(x)\dvg(y)\\
      &\leq C(N,l)\int_{\bn}\int_{\bn}\int_{\frac{1}{2}}^1\frac{|\nabla_{\bn} f(\gamma_{x,y}(t))|^2 \mathbf{1}_{B(\gamma_{x,y}(t), l)}(x)J_{x, t}(y) }{(d(\gamma_{x,y}(t),x))^{N+2s-2}}dt\dvg(x)\dvg(y).
      \end{align*}
      \noindent
In the above series of estimates, in the third step we used the fact that for $ t \in (\frac{1}{2},1) $, the point $ \gamma_{x,y}(t) $ lies on the geodesic segment joining $ x $ and $ y $ then $d(x,y)\geq d(\gamma_{x,y}(t),x) $. Now we can apply Fubini–Tonelli’s theorem and perform the change of variables as indicated in \eqref{ChangeOfVariables}. Letting $ z := \gamma_{x,y}(t) $, the integral becomes
\begin{align*}
      I_1&\leq C(N,l)\int_{\frac{1}{2}}^1\int_{\bn}\int_{\bn}\frac{|\nabla_{\bn} f(z)|^2 \mathbf{1}_{B(z, l)}(x)}{(d(z,x))^{N+2s-2}}\dvg(z)\dvg(x)dt\\
       & \leq \|\nabla_{\bn} f\|_{L^2(\bn)}\int_{B(x,l)}\frac{1}{(d(z,x))^{N+2s-2}}\dvg(z)\\
    &=\|\nabla_{\bn}f\|_{L^2(\bn)}\int_{B(0,l)}\frac{1}{(d(z,0))^{N+2s-2}}\dvg(z)\\
    &= \|\nabla_{\bn} f\|_{L^2(\bn)} \int_{\mathbb{S}^{N-1}}\int_0^l \frac{(\sinh r )^{N-1}}{r^{N+2s-2}}\mathrm{~d}r \mathrm{~d}\sigma.
    \end{align*}
 Finally, to make sense of this last integral, we can choose $l$ small enough so that, for small  $r $ (i.e., $ 0 < r < 1 $), we use  $ \sinh r = r + O(r^3). $ Then,
$$ (\sinh r)^{N-1} = r^{N-1} + O(r^{N+1}). $$
Hence
\begin{align*}
\int_0^l \frac{(\sinh r )^{N-1}}{r^{N+2s-2}}\mathrm{~d}r&= \int_0^l \frac{r^{N-1} + O(r^{N+1})}{r^{N+2s-2}} \, dr\\
&=\int_0^l r^{-2s+1} dr + O \left( \int_0^l r^{-2s+3} dr \right)=\frac{l^{-2s+2}}{-2s+2}
+O(l^{-2s+4}) \text{ for $s \in (0,1)$}.
\end{align*}
Therefore,
$$I_1 \leq C\|\nabla_{\bn} f\|_{L^2(\bn)}.$$
\noindent
Now moving on to the next integral, we have

\begin{align*}
 I_2&:= \int_{\bn}\int_{B(y,m)^{\complement}}(f(x)-f(y))^2 \mathcal{K}_{s}(d(x,y))\dvg(x)\dvg(y)\\
    &\leq C \int_{\bn}\int_{B(y,m)^{\complement}}\frac{(f(x)-f(y))^2}{d(x,y)^{1+s}e^{(N-1)d(x,y)}}\dvg(x)\dvg(y)\\
    &\leq C \int_{\bn}\int_{B(y,m)^{\complement}}\left[\frac{(f(x))^2}{d(x,y)^{1+s}e^{(N-1)d(x,y)}}+\frac{(f(y))^2}{d(x,y)^{N+2s}e^{(N-1)d(x,y)}}\right]\dvg(x)\dvg(y) \\
    &=C \int_{\bn}(f(y))^2 \int_{B(y,m)^{\complement}}\frac{1}{d(x,y)^{1+s}e^{(N-1)d(x,y)}}\dvg(x)\dvg(y)\\
    &\hspace{1cm}+\int_{\bn}(f(x))^2\int_{B(x,m)^{\complement}}\frac{1}{d(x,y)^{1+s}e^{(N-1)d(x,y)}}\dvg(y)\dvg(x)\\
    & \leq C \|f\|_{L^2(\bn)}\int_{B(x,m)^{\complement}}\frac{1}{d(x,y)^{1+s}e^{(N-1)d(x,y)}}\dvg(y)\\&= C \|f\|_{L^2(\bn)}\int_{B(0,m)^\complement}\frac{1}{d(z,0)^{1+s}e^{(N-1)d(z,0)}}\dvg(z)\\
    &= C \|f\|_{L^2(\bn)}\int_{\mathbb{S}^{N-1}}\int_m^\infty \frac{(\sinh r)^{N-1}}{r^{1+s}e^{(N-1)r}}\mathrm{~d}r \mathrm{~d}\sigma.
\end{align*}
Now, we use the approximation $\sinh r \thicksim e^r\;\text{for large } r$, and  chose $m$ large enough 
\begin{align*}
    \int_{m}^{\infty}\frac{(\sinh r)^{N-1}}{r^{1+s}e^{(N-1)r}}\mathrm{~d}r   \thicksim \int_{m}^{\infty} r^{-1-s} dr = \frac{m^{-s}}{s} \quad \text{for } s > 0.
\end{align*}
Thus we get
$$I_2 \leq C \|f\|_{L^2(\bn)}.$$
\noindent
Finally, we estimate $I_3$ as follows
\begin{align*}
    I_3:=&\int_{\bn}\int_{B(y,m)\setminus B(y,l)}(f(x)-f(y))^2 \mathcal{K}_{s}(d(x,y))\dvg(x)\dvg(y)\\
    & \leq C\|f\|_{L^2(\bn)}\int_{B(0,m)\setminus B(0,l)}\mathcal{K}_{s}(d(z,0))\dvg(z).
\end{align*}
For any $z \in \bn$ such that $d(z,0)>l$, we have 
$d(z,0)>l>\tilde{l}= d(q,0)$ for some $\tilde{l}>0, q \in \bn$. Thus using Lemma~\ref{RadDecreasingLemma}, we get 
$$\mathcal{K}_{s}(d(z,0)) \leq \mathcal{K}_{s}(d(q,0)) \;\;\forall z \in \bn \text{ such that } d(z,0)>l>d(q,0).$$
Therefore, we get
\begin{align*}
    I_3 \leq \mathcal{K}_{s}(d(q,0))C\|f\|_{L^2(\bn)}\int_{B(0,m)\setminus B(0,l)}\dvg(z) \lesssim \|f\|_{L^2(\bn)}.
\end{align*}
\noindent
Therefore combinig all the above estimates of $I_1,I_2, I_3$, we get
\begin{align*}
   \int_{\bn}\int_{\bn}(f(x)-f(y))^2 \mathcal{K}_{s}(d(x,y))\dvg(x)\dvg(y) \lesssim \left(\|\nabla_{\bn} f\|_{L^2(\bn)}+\|f\|_{L^2(\bn)}\right).
\end{align*}
Hence the lemma follows.
\end{proof}

\smallskip
\begin{remark}
    Moreover, if we define the fractional Laplacian using the Fourier transform \eqref{FourierTranformDefn}, the above embedding naturally follows from the Plancherel formula \eqref{plancheralFormulaEqn} in a straightforward way as follows.\\
    The fractional Sobolev space can be defined as 
    \begin{equation*}
    \widehat{H}^s(\bn) = \left\{ f \in L^2(\bn) \mid \|f\|_{L^2(\bn)} + \|(-\Delta_{\bn})^{s / 2} f\|_{L^2(\bn)} < \infty \right\},
\end{equation*}
where $(-\Delta_{\bn})^{s / 2}$ is understood in the sense of \eqref{FourierTranformDefn}.
Then for $N\geq 3$, $f\in H^1(\bn)$, we compute
\begin{align*}
    \int_{\bn} |(-\Delta_{\bn})^{s/2} f|^2 \dvg(x) &= \int_{\mathbb{R} \times \mathbb{S}^{N-1}} |(\widehat{-\Delta_{\bn})^{s/2} f}(\beta,\theta)|^2 \frac{\mathrm{d} \beta\mathrm{d} \theta}{|c(\beta)|^2} \\
    &= \int_{\mathbb{R} \times \mathbb{S}^{N-1}} \left(\beta^2 + \frac{(N-1)^2}{4}\right)^s |\hat{f}(\beta, \theta)|^2 \frac{\mathrm{d} \beta\mathrm{d} \theta}{|c(\beta)|^2} \\
    &\leq \int_{\mathbb{R} \times \mathbb{S}^{N-1}} \left(\beta^2 + \frac{(N-1)^2}{4}\right) |\hat{f}(\beta, \theta)|^2 \frac{\mathrm{d} \beta\mathrm{d} \theta}{|c(\beta)|^2} \\
    &= \int_{\mathbb{R} \times \mathbb{S}^{N-1}} \hat{f}(\beta, \theta) \widehat{(-\Delta_{\bn}) \bar{f}}(\beta, \theta) \frac{\mathrm{d} \beta\mathrm{d} \theta}{|c(\beta)|^2} \\
    &= \int_{\bn} f(x) (-\Delta_{\bn} \bar{f}(x)) \dvg(x)\\
    &= \int_{\bn} |\nabla_{\bn} f|^2 \dvg(x).
\end{align*}
\end{remark}

\section{Existence of a Solution in the Subcritical case}\label{subcrticalSection}
\subsection{Variational Formulation}
\begin{definition}
We say that $u \in H^1(\bn)$ is a weak solution to \eqref{mainEq} if it satisfies the variational identity
\begin{small}
\begin{align*}
    &\int_{\bn}\grad_{\bn} u\grad_{\bn} v \dvg+ \int_{\bn}\int_{\bn}(u(x)-u(y))(v(x)-v(y))\mathcal{K}_s(d(x,y))\dvg(x)\dvg(y)\\
    & \quad - \lambda\int_{\bn}uv \dvg =\int_{\bn}|u|^{p-1}uv \dvg \;\;\forall v \in H^1(\bn).
\end{align*}
\end{small}
\end{definition}

\noindent
Next, we define the energy functional $I: H^1(\bn) \to \mathbb{R}$ by
\begin{align}
    I(u)&= \frac{1}{2} \int_{\bn} |\nabla_{\bn} u|^2 \dvg - \frac{\lambda}{2} \int_{\bn} |u|^2 \dvg \label{EnergyFuncSubcritical}\\
    &+ \frac{1}{2} \int_{\bn}\int_{\bn}(u(x)-u(y))^2\mathcal{K}_s(d(x,y))\dvg(x)\dvg(y) - \frac{1}{p+1} \int_{\bn} |u|^{p+1} \dvg. \notag
\end{align}
\noindent
It is straightforward to see that the energy functional $I$ is well-defined using \eqref{PoinSobIneq} and \eqref{1tosEmbedding}. Moreover, its Fr\'echet derivative is given by
\begin{align*}
   I'(u)(v)&= \int_{\bn} \nabla_{\bn} u \cdot \nabla_{\bn} v \dvg - \lambda \int_{\bn} u v \dvg\\
   &+ \int_{\bn}\int_{\bn}(u(x)-u(y))(v(x)-v(y))\mathcal{K}_s(d(x,y))\dvg(x)\dvg(y)\\
    &\quad \quad - \int_{\bn} |u|^{p-1} u v \dvg, \quad \forall v \in H^1(\bn).
\end{align*}
\noindent
Consequently, weak solutions of \eqref{mainEq} correspond to the critical points of the functional $I$.

\smallskip
\noindent
\subsection{Existence of a solution}Now, we establish the existence of a solution to \eqref{mainEq} for values of $p$ within the subcritical range, i.e., $1<p<2^*-1$.

\noindent
\medskip
Consider a function $u \in H^1(\bn)\setminus \{0\}$ and define the function
$$ g(t) = I(tu), \quad t > 0. $$
This function attains a unique maximum at some $t(u) > 0$, and the corresponding scaled function $t(u)u$ belongs to the Nehari manifold $\Lambda$, associated with the energy functional $I$, defined as
\begin{equation}
    \Lambda = \{ u \in H^1(\bn) \setminus \{0\} \mid I'(u)(u) = 0 \}. \label{nehariManifoldSubcritical}
\end{equation}
Explicitly, $t(u)$ is given by
$$ t(u) = \left(\frac{\|u\|_\lambda^2 + [u]_s^2}{\int_{\bn} |u|^{p+1}\dvg}\right)^{\frac{1}{p-1}}. $$

\noindent 

\noindent
Next, we define the minimization level
$$ c^* = \inf_{u \in \Lambda} I(u), $$
and observe that it satisfies the following characterization
\begin{equation*}
    c^{*} = \inf _{u \in H^{1}(\bn) \backslash\{0\}} \sup _{\theta \geq 0} I(\theta u).
\end{equation*}

\noindent
On the other hand, we introduce the set of admissible paths
$$ \Gamma = \left\{ g \in C([0,1], H^{1}(\bn)) \mid g(0) = 0, \;I(g(1)) < 0 \right\}, $$
and define
\begin{equation*}
    c = \inf _{g \in \Gamma} \sup _{t \in[0,1]} I(g(t)).
\end{equation*}
The set $\Gamma$ is nonempty. Indeed, given $u \in H^1(\bn)$, we may define a continuous path $g(t) = tT u$, where the constant $T$ is chosen such that $I(Tu) < 0$.
\begin{definition}
    A weak solution $u$ of \eqref{mainEq} is called a mountain pass weak solution if $I(u)=c$

\end{definition}
\begin{remark}\label{Rem:MtinPassLevel>0}
Moreover, we have $c > 0$. Consider any $u \in H^1(\bn)$. From the definition of $I$, we obtain the estimate
\begin{align}\label{eq1.1}
    I(u) &= \frac{1}{2}\|u\|_\lambda^2 + \frac{1}{2} [u]_s^2 - \frac{1}{p+1} \int_{\bn} |u|^{p+1} \, \dvg \\\notag
         &\geq \frac{1}{2}\|u\|_\lambda^2 - \frac{1}{p+1} \int_{\bn} |u|^{p+1} \, \dvg \\\notag
         &\geq \frac{1}{2}\|u\|_\lambda^2 - \frac{1}{p+1} \|u\|_\lambda^{p+1} S_{\la,p}^{-\frac{p+1}{2}},
\end{align}
where $S_{\la,p}$ is the Sobolev constant defined in \eqref{SubcritPoincare}. Setting
$ r := \left[ \frac{p+1}{4} S_{\la,p}^{\frac{p+1}{2}} \right]^{\frac{1}{p-1}}, $
we obtain the lower bound
$$ \min_{\|u\|_\lambda = r} I(u) \geq \frac{1}{4} \left( \frac{(p+1) S_{\la,p}^{\frac{p+1}{2}}}{4} \right)^{\frac{2}{p-1}}. $$
Since $I(g(1)) < 0$ for any $g \in \Gamma$, it follows that $\|g(1)\|_\lambda \geq r$ using \eqref{eq1.1} for $\gamma(1)$. Consequently, there exists $t_g \in (0,1)$ such that $\|g(t_g)\|_\lambda = r$. This leads to the estimate
$$ \sup _{t \in[0,1]} I(g(t)) \geq I(g(t_g)) \geq \frac{1}{4} \left( \frac{(p+1) S_{\la,p}^{\frac{p+1}{2}}}{4} \right)^{\frac{2}{p-1}}, $$
which implies that $c > 0$.
\end{remark}


\medskip

\noindent
We now establish the existence of a critical point for the functional $I$ by employing the mountain pass theorem. Our approach is based on variational methods, where we leverage the compactness of the embedding mentioned in \ref{radial Lemma} to handle the potential losses of compactness.

\begin{lemma}[\cite{MousomiUTF}] \label{radial Lemma}
Let $ H^1_r(\bn) $ denote the subspace of radial functions in $ H^1(\bn) $, that is,
$$
H^1_r(\bn) := \left\{ u \in H^1(\bn) : u \text{ is radial} \right\}.
$$
Then, for every $ 2 < q < 2^* $, the embedding
$$
H^1_r(\bn) \hookrightarrow L^q(\bn)
$$
is compact.
\end{lemma}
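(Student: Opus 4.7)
The plan is to prove compactness via a standard two-step strategy: combine local Rellich--Kondrachov compactness on geodesic balls with a uniform tail decay estimate, where the latter is obtained from a Strauss-type pointwise bound for radial $H^1$ functions on $\bn$.

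\textbf{Step 1 (Pointwise decay for radial $H^1$ functions).} For $u \in H^1_r(\bn)$, write $u = u(r)$ in geodesic polar coordinates centered at $0$, so that
\[
\|u\|_{H^1(\bn)}^2 = \omega_{N-1} \int_0^\infty \bigl(|u'(r)|^2 + |u(r)|^2\bigr) (\sinh r)^{N-1}\, dr.
\]
Set $\psi(r) := u(r)^2 (\sinh r)^{N-1}$. A direct computation gives
\[
\psi'(r) = 2 u(r) u'(r) (\sinh r)^{N-1} + (N-1) u(r)^2 (\sinh r)^{N-1} \coth r.
\]
Since $\coth r \leq \coth 1$ for $r \geq 1$, Cauchy--Schwarz yields $\int_r^\infty |\psi'(\rho)|\, d\rho \lesssim \|u\|_{H^1(\bn)}^2$ for $r\geq 1$. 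Since $\psi \in L^1([1,\infty))$, this absolute continuity forces $\psi(r) \to 0$ at infinity, and more precisely
\[
|u(r)| \leq C (\sinh r)^{-(N-1)/2} \|u\|_{H^1(\bn)} \leq C e^{-(N-1)r/2} \|u\|_{H^1(\bn)}, \quad r \geq 1.
\]
This is the hyperbolic analog of Strauss's radial lemma and is the key estimate.

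\textbf{Step 2 (Local compactness and tail control).} Let $\{u_n\} \subset H^1_r(\bn)$ be bounded, say by $M$. Passing to a subsequence, $u_n \rightharpoonup u$ weakly in $H^1_r(\bn)$. Since every geodesic ball $B(0,R) \subset \bn$ is a smooth precompact domain, the classical Rellich--Kondrachov theorem gives $u_n \to u$ strongly in $L^q(B(0,R))$ for every $R > 0$ and every $2 \leq q < 2^*$. For the tail, fix $q \in (2, 2^*)$ and use Step 1:
\[
\int_{\bn \setminus B(0,R)} |u_n|^q\, \dvg \leq \Bigl(\sup_{r>R} |u_n(r)|\Bigr)^{q-2} \int_{\bn} u_n^2\, \dvg \leq C M^{q-2} e^{-(N-1)(q-2)R/2} M^2,
\]
which tends to $0$ as $R \to \infty$, \emph{uniformly} in $n$. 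Combining local strong convergence with uniform decay of the tails, a standard $\varepsilon/2$ argument yields $u_n \to u$ strongly in $L^q(\bn)$.

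\textbf{Main obstacle.} The only nontrivial point is the pointwise decay in Step 1; everything else is standard. What makes the argument actually \emph{easier} than in the Euclidean setting is that the hyperbolic volume density grows exponentially rather than polynomially, so radial functions of finite energy must decay exponentially, providing very robust tail control. The one minor care needed is that $\coth r$ blows up at the origin, so one keeps the argument on $r \geq 1$ and handles $r \in (0,1)$ separately (where both $\sinh r$ and its reciprocal behave polynomially and no decay is needed). The hypothesis $q > 2$ is essential: at $q = 2$, no gain from the supremum is available, which reflects the well-known failure of the $H^1(\bn) \hookrightarrow L^2(\bn)$ embedding to be compact even on the radial subspace.
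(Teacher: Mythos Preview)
The paper does not actually prove this lemma: it is stated with a citation to \cite{MousomiUTF} and used as a black box, with no proof given in the body of the paper. So there is no ``paper's own proof'' to compare against.

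Your argument is correct and is precisely the standard route to this result: a Strauss-type radial estimate $|u(r)|\lesssim (\sinh r)^{-(N-1)/2}\|u\|_{H^1}$ on $r\ge 1$, obtained by integrating the derivative of $u(r)^2(\sinh r)^{N-1}$ and using $\coth r\le\coth 1$, followed by local Rellich--Kondrachov on balls and uniform tail control via the interpolation $|u_n|^q=|u_n|^{q-2}|u_n|^2$. Two minor remarks: (i) the paper's $H^1(\bn)$-norm is the pure gradient norm, but since the bottom of the spectrum is $(N-1)^2/4>0$ the two norms are equivalent, so your use of the full $H^1$ norm is harmless; (ii) the conclusion $\psi(r)\to 0$ is justified by $\psi\ge 0$, $\psi\in L^1$, and $\psi'\in L^1$, exactly as you indicate. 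Your observation that $q>2$ is essential is also on point and worth keeping.
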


\noindent
With this compactness result in hand, we now proceed to establish the following existence and symmetry result for a weak solution of $I$.
We aim to show that $I$ admits at least one critical point with critical value $c$.\\

Before proceeding, we recall the notion of symmetric rearrangement. For a function $u: \mathbb{B}^N \rightarrow \mathbb{R}^{+} \cup {+\infty}$, the symmetric rearrangement, also known as \textit{Schwarz symmetrization}, $u^*: \mathbb{B}^N \rightarrow \mathbb{R}^{+} \cup {+\infty}$, is defined as the unique function such that for every $\lambda > 0$, there exists $R > 0$ satisfying
\begin{equation*}
\left\{x \in \mathbb{B}^N : u^*(x) > \lambda\right\} = B(0,R),
\end{equation*}
and
\begin{equation*}
\mu\left\{x \in \mathbb{B}^N : u^*(x) > \lambda\right\} = \mu\left\{x \in \mathbb{B}^N : u(x) > \lambda\right\},
\end{equation*}
where $B(0,R)$ denotes the geodesic ball in $\mathbb{B}^N$ centered at $0$ with radius $R$, and $\mu$ denotes the volume measure in $\mathbb{B}^N$.

Thus, $u^*$ is a radial and radially decreasing function whose superlevel sets have the same measure as those of $u$ (refer to \cite{Baernstein1,BrockSolynin}).

\bigskip

\begin{proof}[Proof of Theorem~\ref{ExistCritPt}]

\textbf{\textit{(1)}} Here we prove the existence of a minimizer of $I$ subject to $\Lambda$. Choose a minimizing sequence $\left\{u_n\right\} \in \Lambda$ of $I$. We may assume that each $u_n$ is nonnegative. If it is not, then we can work with $|u_n|$. Indeed,
\begin{align*}
    \||u_n|\|_\lambda^2 + [|u_n|]_s^2 &\leq \|u_n\|_\lambda^2 + [u_n]_s^2 = \int_{\mathbb{B}^N} |u_n|^{p+1} \, \dvg.
\end{align*}

\noindent
The above inequality follows from the properties  
$
|\nabla_{\bn}|u_n||_{\bn} \leq |\nabla_{\bn} u_n|_{\bn}$
and  
$
||u_n(x)| - |u_n(y)|| \leq |u_n(x) - u_n(y)|.
$
Hence if we set 
$$r(t)=I'(t|u_n|)(t|u_n|)= t^2\||u_n|\|_\la^2+t^2[|u_n|]_s^2-t^{p+1}\int_{\bn}|u_n|^{p+1}\dvg,$$ 
thus we have $r(1)\leq 0$ while $r(t)>0$ for $t>0$ and small. So there exist $t_n \in (0,1]$ such that $I(t_n|u_n|)\in \Lambda$. Therefore, we obtain
\begin{align*}
    c^*\leq I(t_n|u_n|)= \frac{t_n^2}{2}\||u_n|\|_\la^2+\frac{t_n^2}{2}[|u_n|]_s^2-\frac{t_n^{p+1}}{p+1}\int_{\bn}|u_n|^{p+1}\dvg\\
    =t_n^{p+1}\left(\frac{1}{2}-\frac{1}{p+1}\right)\int_{\bn}|u_n|^{p+1}\dvg\\
    \leq \left(\frac{1}{2}-\frac{1}{p+1}\right)\int_{\bn}|u_n|^{p+1}\dvg = I(u_n)
\end{align*}

\noindent
which shows $\left\{t_n|u_n|\right\}$ is also a minimizing sequence. Then we may denote $\left\{t_n|u_n|\right\}$ just by $\left\{u_n\right\}$. We also may assume that each $u_n$ is radially symmetric, i.e. $u_n \in H_r^1\left(\mathbb{B}^N\right)$. If it is not, let $u_n^* \in H_r^1\left(\mathbb{B}^N\right)$ be the symmetric decreasing rearrangement of $u_n$ . From \cite[Theorem 3]{Beckner92} (also refer \cite{Baernstein1}), and Lemma~\ref{RadDecreasingLemma},  one has

\begin{equation*}
\int_{\mathbb{B}^N}\left|(-\Delta_{\bn})^{s / 2} u_n^*\right|^2 \dvg \leqslant \int_{\mathbb{B}^N}\left|(-\Delta_{\bn})^{s / 2} u_n\right|^2 \dvg
\end{equation*}

\begin{equation*}
    \int_{\bn}|\nabla_{\bn} u_n^*|^2 \dvg\leq, \int_{\bn}|\nabla_{\bn} u_n|^2 \dvg,
\end{equation*}
and it is well known that
\begin{equation*}
\int_{\mathbb{B}^N}\left(u_n^*\right)^q\dvg=\int_{\mathbb{B}^N}  u_n^q \dvg, \text{ \;\;for any $q>1$.}
\end{equation*}
so once more we have $I\left(u_n^*\right) \leqslant I\left(u_n\right)$. Consequently, there exists $t_n>0$ such that $\left\{t_nu_n^*\right\}$ is a minimizing sequence as before. We again denote  $\left\{t_nu_n^*\right\}$ by $\left\{u_n\right\}$.

\noindent
The boundedness of $\left\{u_n\right\}$ in $H^1\left(\mathbb{B}^N\right)$ easily follows from $I'\left(u_n\right)(u_n)=0$ and $I(u_n)\rightarrow c^*$. As a consequence, up to a subsequence, we have the following convergence
\begin{equation}
u_n \stackrel{n \rightarrow \infty}{\longrightarrow}u_0\in H_r^1\left(\mathbb{B}^N\right)\left\{\begin{array}{l}
\text { strongly in } L^{p+1}\left(\bn \right) \text{ for } 1<p<2^* -1,\\
\text { weakly in } H^1\left(\bn \right) .
\end{array}\right. \label{convergnceOfMineq}
\end{equation}

\noindent
Note that $$I(u_n)=\left(\frac{1}{2}-\frac{1}{p+1}\right)\int_{\bn}|u_n|^{p+1}\dvg.$$
Thus, \eqref{convergnceOfMineq}, and Remark~\ref{Rem:MtinPassLevel>0} imply  $u_0 \not\equiv 0$.\\

\noindent
Furthermore, by Ekeland’s variational principle (see \cite[Corollary 3.4]{IE}), we can select the minimizing sequence $ \{u_n\} \subset \Lambda \cap H^1_r(\mathbb{B}^N) $ such that
\begin{equation}
I^{\prime}\left(u_n\right)[v]=\lambda_n G^{\prime}\left(u_n\right)[v]+o(1)\|v\|_{\lambda} \quad \forall v \in H_r^1\left(\bn\right). \label{a.4}
\end{equation}
where for all $n \in \mathbb{N}, \lambda_n \in \mathbb{R} $ is the Lagrange multiplier and $ G(u) := I^{\prime}(u)[u] $. Moreover, we have $ G(u_n) = 0 $ for all $ n \in \mathbb{N} $, and thus \eqref{a.4} implies
\begin{equation*}
0=G\left(u_n\right)=I^{\prime}\left(u_n\right)\left[u_n\right]=\lambda_n G^{\prime}\left(u_n\right)\left[u_n\right]+o(1)\left\|u_n\right\|_{\la}.\end{equation*}
\noindent
Since $ \|u_n\|_{\lambda} $ is bounded and $ G^{\prime}(u_n)[u_n] \leq \tilde{C} < 0 $  on $\Lambda$ for some constant $\tilde{C}$, it follows that $ \lambda_n = o(1) $ as $ n \to \infty $.
Next, choosing $ v = u_0 $ in \eqref{a.4} and using \eqref{convergnceOfMineq}, we deduce that $ u_0 \in \Lambda $.
Moreover, applying again \eqref{convergnceOfMineq}, we obtain
\begin{equation*}
c^* \leq I\left(u_0\right) = \lim_{n \rightarrow \infty}\left(\frac{1}{2}-\frac{1}{p+1}\right)\int_{\bn}|u_n|^{p+1}=c^*.
\end{equation*}
This shows that the desired minimizing function is $ u_0 $. Moreover, we may assume $ u_0 \geq 0 $, since otherwise we can consider $ t|u_0| $ as a minimizer by following the same reasoning as above.

\medskip
\noindent
\textbf{Proof of }\textit{(2)}:
To prove this part it is enough to show that $c=c^*$. Given any $u \in \Lambda$, we define a path $g_u$ as $g_u(t) = tTu$, where $I(Tu) < 0$. This ensures that $g_u \in \Gamma$, which immediately implies $c \leqslant c^*$.

\medskip
\noindent
The reverse inequality follows from the fact that, for any $g \in \Gamma$, there exists $t \in (0,1)$ such that $g(t) \in \Lambda$.

\noindent
To justify this claim, observe that if $I'(u) u \geqslant 0$, then we obtain
\begin{equation} \label{eq:1.1a}
I(u) \geq \frac{1}{2} \int_{\mathbb{B}^N} |u|^{p+1} - \frac{1}{p+1} \int_{\mathbb{B}^N} |u|^{p+1} \geqslant 0, \quad \text{as } p > 1.
\end{equation}
Thus, assuming $I'(g(t)) g(t) \geq 0$ for all $t \in (0,1]$, it follows that $I(g(t)) \geq 0$ for all $t \in (0,1]$, contradicting the fact that $I(g(1)) < 0$.

\noindent
Moreover, for any $g \in \Gamma$, we have $I(g(1)) < 0$, which leads to the estimate
$$ \frac{1}{2} \|g(1)\|_\lambda^2 + \frac{1}{2} [g(1)]_s^2 - \frac{1}{p+1} \int_{\mathbb{B}^N} |g(1)|^{p+1} \dvg < 0. $$
Thus, we deduce that
$$ I'(g(1))(g(1)) < \left( \frac{2}{p+1} - 1 \right) \int_{\mathbb{B}^N} |g(1)|^{p+1} \dvg < 0. $$

\noindent
From this, we further obtain
\begin{equation} \label{eq:1a}
\begin{aligned}
    0 > I'(g(1))(g(1)) &= \|g(1)\|_\lambda^2 + [g(1)]_s^2 - \int_{\mathbb{B}^N} |g(1)|^{p+1} \dvg \\
    & \geq \|g(1)\|_\lambda^2 - \|g(1)\|_\lambda^{p+1} S_{\la,p}^{-\frac{p+1}{2}}.
\end{aligned}
\end{equation}
\noindent
Setting $\tilde{r} = \left( \frac{1}{4} S_{\la,p}^{\frac{p+1}{2}} \right)^{\frac{1}{p-1}}$, we conclude that $\|g(1)\|_\lambda > \tilde{r}$. Since $g(0) = 0$, there exists some $\tilde{t}_g \in (0,1)$ such that $\|g(\tilde{t}_g)\|_\lambda = \tilde{r}$.

\noindent
Applying \eqref{eq:1a}, we obtain
$$ \min_{\|u\|_\lambda = \tilde{r}} I'(u)(u) \geq \frac{3}{4} \left( \frac{1}{4} S_{\la,p}^{\frac{p+1}{2}} \right)^{\frac{2}{p-1}} > 0, \quad \text{for all } u \in H^1(\mathbb{B}^N). $$
In particular, $I'(g(\tilde{t}_g))(g(\tilde{t}_g)) > 0$. By combining this observation with \eqref{eq:1.1a}, we establish our claim, which immediately yields $c \geq c^*$. This completes the proof of $\textit{(2)}$.

\medskip
\noindent
 \textbf{Proof of} \textit{(3)}: Suppose that there exists a minimizer $u \in \Lambda$ of $I$, subject to $\Lambda$, which is not radially symmetric up to a translation. Now, let $u^*$ be the symmetric rearrangement of $u$. Then, from \cite[Theorem 3]{Beckner92}, and Lemma~\ref{RadDecreasingLemma}, one has a strict inequality
\begin{equation*}
\int_{\mathbb{B}^N} |(-\Delta_{\bn})^s u^*|^2 \, \dvg < \int_{\mathbb{B}^N} |(-\Delta_{\bn})^s u|^2 \, \dvg.
\end{equation*}
As in the proof of $\textit{(1)}$,  there exists $t_0$ such that $t_0u^*\in \Lambda$.
Now consider
\begin{align*}
    I(t_0u^*)&= \frac{t_0^2}{2}\|u^*\|_\la^2+\frac{t_0^2}{2}[u^*]_s^2-\frac{t_0^{p+1}}{p+1}\int_{\bn}|u^*|^{p+1}\dvg\\
   & < \frac{t_0^2}{2}\|u\|_\la^2+\frac{t_0^2}{2}[u]_s^2-\frac{t_0^{p+1}}{p+1}\int_{\bn}|u|^{p+1}\dvg\\
    &\leq \left(\frac{t_0^2}{2}-\frac{t_0^{p+1}}{p+1}\right)\int_{\bn}|u|^{p+1}\dvg\\
    &\leq \left(\frac{1}{2}-\frac{1}{p+1}\right)\int_{\bn}|u|^{p+1}\dvg =I(u).
\end{align*}
This contradicts the definition of $ c^*$, and thus concludes the proof of Theorem~\ref{ExistCritPt}.

\end{proof}

\section{A critical exponent problem}\label{criticalSection}
\noindent
The aim of this section is to establish the existence of non-trivial solutions to the problem \eqref{mainEq2}. To this end, we will again adapt the variational techniques to the non-local setting. Specifically, we investigate the critical points of the associated energy functional $J : H^1(\bn) \to \mathbb{R}$, defined by
\begin{align} \label{energyFunctionalCritical}
J(u) = \frac{1}{2}\int_{\bn}|\nabla_{\bn
}u|^2\dvg+\frac{1}{2} \int_{\bn}\int_{\bn} |u(x) - u(y)|^2 {\mathcal{K}_s}(d(x,y)) \dvg(x) \dvg(y)\\ \notag - \frac{\lambda}{2} \int_{\bn} |u|^2  \dvg
- \frac{1}{2^*} \int_{\bn} |u|^{2^*} \dvg - \frac{1}{p+1}\int_{\bn} |u|^{p+1} \dvg.
\end{align}
\begin{remark}\label{symmetricCriticalityRemark}
Observe that, by the principle of symmetric criticality \cite{symmetricPrinciple}, it is sufficient to prove the existence of a critical point of $J \text{ in } H^1_r(\bn)$ to establish the existence of a solution to \eqref{mainEq2}. 
\end{remark}

The equation \eqref{mainEq2} is the Euler–Lagrange equation corresponding to the energy functional $ J $ defined in \eqref{energyFunctionalCritical}. The functional $ J $ is Fr\'echet differentiable on the radial Sobolev space $ H_r^1(\bn) $, and for every $ \varphi \in H_r^1(\bn) $, its derivative is given by
\begin{align*}
    \langle J'(u), \varphi \rangle &= \int_{\bn} \nabla_{\bn} u \cdot \nabla_{\bn} \varphi \, \dvg- \lambda \int_{\bn} u \varphi \, \dvg  \\
    &\quad+\int_{\bn}\int_{\bn} (u(x) - u(y)) (\varphi(x) - \varphi(y)) \, \mathcal{K}_s(d(x,y)) \, \dvg(x)\dvg(y) \\
    &\quad - \int_{\bn} |u|^{2^*-2} u \varphi \, \dvg - \int_{\bn} |u|^{p-1} u \varphi \, \dvg.
\end{align*}
Thus, by Remark~\ref{symmetricCriticalityRemark}, the critical points of $ J $ correspond to weak solutions of problem \eqref{mainEq2}. To identify such critical points, we make use of the Mountain Pass Theorem. However, the classical Mountain Pass Theorem cannot be directly applied in this context due to the lack of compactness in the embedding $ H_r^1(\bn) \hookrightarrow L^{2^*}(\mathbb{B}^N) $. This lack of compactness implies that the functional $J$ does not satisfy the Palais–Smale condition globally. Nevertheless, compactness can still be recovered within a suitable range of energy levels, specifically those below a threshold determined by the best Sobolev constant $S_{\lambda,s}$ associated with the mixed local–nonlocal operator defined in \eqref{eq:SK}.

\smallskip
To circumvent the limitations imposed by the lack of compactness, we employ a generalized version of the Mountain Pass Theorem that does not require the Palais–Smale condition to hold globally (see \cite{AR}, \cite[Theorem 2.2]{BN}). This approach hinges on verifying that the functional $ J $ possesses an appropriate geometric structure-specifically, the so-called mountain pass geometry on the radial Sobolev space $ H_r^1(\bn) $.

\subsection{Existence of a Solution in the Critical Case with a Lower Order Perturbation}
We begin by establishing that the functional $J$ indeed exhibits the mountain pass geometry required to apply the variant of the Mountain Pass Theorem stated in \cite[Theorem 2.2]{BN}.
\begin{proposition}\label{BdryMoutainpass}
Assume $ \lambda < \frac{(N-1)^2}{4} $ and $ 1 < p < 2^* - 1 $. Then there exist positive constants $ \rho > 0 $ and $ \beta > 0 $ such that for every $ u \in H_r^1(\bn) $ satisfying $ \|u\|_{\lambda} = \rho $, it holds that $ J(u) \geq \beta $.
\end{proposition}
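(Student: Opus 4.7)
The plan is to obtain the desired $\rho$ and $\beta$ by a direct pointwise lower bound on $J$ along the sphere $\{\|u\|_{\lambda} = \rho\}$ in $H_r^1(\bn)$, mimicking the standard mountain pass geometry argument for Brezis--Nirenberg type functionals. The starting observation is that the nonlocal term $[u]_s^2$ in the definition \eqref{energyFunctionalCritical} is manifestly nonnegative (by positivity of $\mathcal{K}_s$ from Lemma~\ref{RadDecreasingLemma}), so it can simply be discarded to give
$$J(u) \geq \tfrac{1}{2}\|u\|_\lambda^2 - \tfrac{1}{2^*}\|u\|_{2^*}^{2^*} - \tfrac{1}{p+1}\|u\|_{p+1}^{p+1}.$$
This reduces matters to controlling the two superquadratic terms by powers of $\|u\|_\lambda$.

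For each of these terms I would invoke the sharp Poincar\'e--Sobolev inequality \eqref{SubcritPoincare}: applied with exponent $2^*$ it yields $\|u\|_{2^*}^{2^*} \leq S_{\lambda,2^*-1}^{-2^*/2}\|u\|_\lambda^{2^*}$, and since the hypothesis $1 < p < 2^*-1$ translates into $2 < p+1 < 2^*$, the same inequality applied with exponent $p+1$ gives $\|u\|_{p+1}^{p+1} \leq S_{\lambda,p}^{-(p+1)/2}\|u\|_\lambda^{p+1}$. Substituting both bounds back into the inequality for $J$, I obtain
$$J(u) \geq \tfrac{1}{2}\|u\|_\lambda^2 - C_1\|u\|_\lambda^{2^*} - C_2\|u\|_\lambda^{p+1},$$
for constants $C_1, C_2 > 0$ depending only on $N, s, p$ and $\lambda$.

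Setting $\|u\|_\lambda = \rho$, the right-hand side factors as $\rho^2\bigl(\tfrac{1}{2} - C_1\rho^{2^*-2} - C_2\rho^{p-1}\bigr)$. Since $2^*-2 > 0$ and $p-1 > 0$, the bracketed expression tends to $1/2$ as $\rho \to 0^+$. I would therefore fix $\rho > 0$ sufficiently small that the bracket is bounded below by $1/4$, and then set $\beta := \rho^2/4 > 0$. This yields $J(u) \geq \beta$ whenever $\|u\|_\lambda = \rho$, as required. I do not anticipate any genuine obstacle here: the proof is essentially a routine verification of mountain pass geometry, and the only point specific to the mixed local--nonlocal setting is that the fractional seminorm has the favorable sign and may be dropped outright, so the embedding \eqref{1tosEmbedding} is not even needed at this stage.
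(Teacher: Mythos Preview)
Your proposal is correct and follows essentially the same route as the paper's proof: drop the nonnegative fractional seminorm, apply the Poincar\'e--Sobolev inequality \eqref{SubcritPoincare} at both exponents $2^*$ and $p+1$, factor out $\|u\|_\lambda^2$, and choose $\rho$ small. Your remark that the embedding \eqref{1tosEmbedding} is not needed here is also in line with the paper, which likewise uses only the sign of $[u]_s^2$ at this step.
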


\begin{proof}
Let $ u \in H_r^1(\bn) $. Using \eqref{SubcritPoincare}, we estimate the functional $ J $ as follows:
\begin{align*}
    J(u) &= \frac{1}{2} \left( \|u\|^2_\lambda + [u]_s^2 \right) - \frac{1}{2^*} \int_{\mathbb{B}^N} |u(x)|^{2^*} \, \dvg(x) - \frac{1}{p+1} \int_{\mathbb{B}^N} |u(x)|^{p+1} \, \dvg(x) \\
    &\geq \frac{1}{2} \|u\|_\lambda^2 - \frac{C_1}{2^*} \|u\|_\lambda^{2^*} - \frac{C_2}{p+1} \|u\|_\lambda^{p+1} \\
    &\geq C \|u\|_\lambda^2 \left( 1 - C_3 \|u\|_\lambda^{2^* - 2} - C_4 \|u\|_\lambda^{p - 1} \right).
\end{align*}

\noindent
Now, let $ u \in H_r^1(\bn) $ be such that $ \|u\|_\lambda = \rho > 0 $. Since $2^* > p+1 > 2$,  by choosing $\rho$ sufficiently small, we ensure that
$$
    \inf_{\substack{u \in H_r^1(\bn)\\ \|u\|_\lambda = \rho}} J(u) \geq C \rho^2 \left( 1 - C_3 \rho^{2^*-2} - C_4 \rho^{p-1} \right) =: \beta > 0,
$$
which concludes the proof.
\end{proof}

\noindent
\begin{proposition}\label{OutsideBallMountainPass}
Let $ \lambda < \frac{(N-1)^2}{4} $ and $1 < p < 2^* - 1$, and suppose that the condition \eqref{CompactnessAssumptionOnFunctional} is satisfied. Then, there exists $e \in H_r^1(\bn)$ such that $e \geq 0$ a.e. in $\mathbb{B}^N$, $\|e\|_\lambda > \rho$, and $J(e) < \beta$, where $\rho$ and $\beta$ are the constants from Proposition~\ref{BdryMoutainpass}.

\noindent
Moreover, such a function $e$ can be explicitly constructed as
\begin{equation}\label{PointOutsideTheBall}
    e = \zeta_0 u_0,
\end{equation}
where $u_0$ is given as in \eqref{CompactnessAssumptionOnFunctional} and $\zeta_0 > 0$ is chosen sufficiently large.
\end{proposition}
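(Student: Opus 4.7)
The plan is to take $e := \zeta_0 u_0$ with $\zeta_0>0$ large enough, and exploit the fact that the critical term in $J$ dominates the other terms as $\zeta \to \infty$, driving $J(\zeta u_0)$ to $-\infty$. Since $u_0 \in H^1_r(\bn)\setminus\{0\}$ with $u_0 \geq 0$ a.e., the nonnegativity of $e$ and its membership in $H^1_r(\bn)$ are automatic for any $\zeta_0>0$.

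Concretely, for $\zeta \geq 0$ define $\phi(\zeta) := J(\zeta u_0)$. Expanding using the definition of $J$ in \eqref{energyFunctionalCritical} and the homogeneity of each term yields
\begin{equation*}
\phi(\zeta) = \frac{\zeta^2}{2}\Bigl(\|u_0\|_\lambda^2 + [u_0]_s^2\Bigr) - \frac{\zeta^{2^*}}{2^*}\int_{\bn}|u_0|^{2^*}\dvg - \frac{\zeta^{p+1}}{p+1}\int_{\bn}|u_0|^{p+1}\dvg.
\end{equation*}
All three integrals are finite: the quadratic piece by Theorem~\ref{1tosTheorem} and the embedding $H^1(\bn)\hookrightarrow L^q(\bn)$ for $2\le q \le 2^*$, and the nonlinear pieces by the Poincaré--Sobolev inequality \eqref{SubcritPoincare}. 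Moreover, since $u_0\not\equiv 0$, the coefficient $\int_{\bn}|u_0|^{2^*}\dvg$ is strictly positive.

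Because $2^* > p+1 > 2$, the term $-\tfrac{\zeta^{2^*}}{2^*}\int |u_0|^{2^*}\dvg$ dominates as $\zeta \to \infty$, so $\phi(\zeta) \to -\infty$. In particular, we can choose $\zeta_0 > 0$ so large that both
\begin{equation*}
\zeta_0 \|u_0\|_\lambda > \rho \quad\text{and}\quad \phi(\zeta_0) < 0 < \beta,
\end{equation*}
where $\rho,\beta$ are the constants furnished by Proposition~\ref{BdryMoutainpass}. Setting $e := \zeta_0 u_0$ then gives $e \in H^1_r(\bn)$ with $e\geq 0$ a.e., $\|e\|_\lambda > \rho$, and $J(e) < \beta$, as required.

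There is no real obstacle here: the assumption \eqref{CompactnessAssumptionOnFunctional} is not needed for this step (it will be used later to locate the mountain pass level below the compactness threshold $\tfrac{1}{N}S_{\lambda,s}^{N/2}$); only the nontriviality and nonnegativity of $u_0$, together with the strict ordering $2 < p+1 < 2^*$, enter the argument.
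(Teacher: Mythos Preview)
Your proof is correct and follows essentially the same approach as the paper: expand $J(\zeta u_0)$ by homogeneity, observe that the super-quadratic terms force $J(\zeta u_0)\to -\infty$ as $\zeta\to\infty$, and pick $\zeta_0$ large enough to get both $\|\zeta_0 u_0\|_\lambda>\rho$ and $J(\zeta_0 u_0)<\beta$. Your remark that only the nontriviality and nonnegativity of $u_0$ (and not the energy bound in \eqref{CompactnessAssumptionOnFunctional}) are used here is accurate and matches the paper's own argument, which first treats a generic nonnegative $u$ and then specializes to $u_0$.
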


\begin{proof}
Let us fix any nontrivial function $u \in H_r^1(\bn)$ satisfying $\|u\|_\lambda = 1$ and $u \geq 0$ a.e. in $\mathbb{B}^N$.

\noindent
Now, for any $\zeta > 0$, evaluate:
\begin{align*}
    J(\zeta u) &= \frac{\zeta^2}{2} \left( \|u\|_\lambda^2 + [u]_s^2 \right) - \frac{\zeta^{2^*}}{2^*} \int_{\mathbb{B}^N} |u(x)|^{2^*} \, \dvg(x) - \frac{\zeta^{p+1}}{p+1} \int_{\mathbb{B}^N} |u(x)|^{p+1} \, \dvg(x).
\end{align*}
Due to the super-quadratic growth of the nonlinearities (since $2^* > 2$ and $p+1 > 2$), we conclude that $J(\zeta u) \to -\infty$.

\noindent
Consequently, we can choose $\zeta > 0$ sufficiently large so that the corresponding function $e := \zeta u$ satisfies both $\|e\|_\lambda > \rho$ and $J(e) < \beta$. In particular, under assumption \eqref{CompactnessAssumptionOnFunctional}, we can take $u = u_0/\|u_0\|_\lambda$ (with $u_0 \not\equiv 0$), which yields the desired result for $e = \zeta_0 u_0$ with a suitable choice of $\zeta_0 > 0$ large enough.
\end{proof}


\medskip \noindent We are now in a position to prove Theorem~\ref{CriticalExponentMainThm}.
\begin{proof}[\textbf{Proof of Theorem~\ref{CriticalExponentMainThm}}]

Thanks to Propositions~\ref{BdryMoutainpass} and \ref{OutsideBallMountainPass}, we know that the functional $J$ satisfies the geometric conditions required to apply the variant of the Mountain Pass Theorem formulated in \cite[Theorem 2.2]{BN}.

\noindent
In addition, it is immediate to observe that $J(0) = 0$, and by Proposition~\ref{BdryMoutainpass}, $0 < \beta$.

\noindent
We define the mountain pass level associated to $J$ as
\begin{equation} \label{c_def}
    m = \inf_{P \in \mathcal{P}} \sup_{v \in P([0,1])} J(v),
\end{equation}
where $\mathcal{P}$ denotes the class of continuous paths
$$
\mathcal{P} = \left\{ P \in C([0,1], H_r^1(\bn)) : P(0) = 0, \, P(1) = e \right\},
$$
with $e = \zeta_0 u_0$ constructed in Proposition~\ref{OutsideBallMountainPass}.

\smallskip
\noindent
To establish Theorem~\ref{CriticalExponentMainThm}, we proceed through several steps.

\medskip
\noindent
\textbf{Step 1:}
We first show that the level $m$ defined in \eqref{c_def} satisfies
$$
\beta \leq m < \frac{1}{N} S_{\lambda,s}^{N/2},
$$
where $\beta$ is the constant from Proposition~\ref{BdryMoutainpass}, and $S_{\lambda,s}$ is as defined in \eqref{eq:SK}.

\smallskip
\noindent
\textit{Proof of Step 1:}
Let us begin by noting that, for any $P \in \mathcal{P}$, the map $t \mapsto \| P(t) \|_{\lambda}$ is continuous on $[0,1]$. Moreover, we have $\| P(0) \|_{\lambda} = \| 0 \|_{\lambda} = 0 < \rho$ and $\| P(1) \|_{\lambda} = \| e \|_{\lambda} > \rho$, where $\rho$ is the constant introduced in Proposition~\ref{BdryMoutainpass}. 

\noindent
By continuity, there exists some $\bar{t} \in (0,1)$ such that $\| P(\bar{t}) \|_{\lambda} = \rho$. Consequently, we deduce
$$
\sup_{v \in P([0,1])} J(v) \geq J(P(\bar{t})) \geq \inf_{\| v \|_{\lambda} = \rho} J(v),
$$
which immediately implies that $m \geq \beta$, with $\beta$ given in Proposition~\ref{BdryMoutainpass}.

\noindent
Next, recall that $e = \zeta_0 u_0$ according to \eqref{PointOutsideTheBall}, and therefore the path defined by $t \mapsto t \zeta_0 u_0$ for $t \in [0,1]$ belongs to $\mathcal{P}$. Using assumption \eqref{CompactnessAssumptionOnFunctional}, we then obtain
$$
\inf_{P \in \mathcal{P}} \sup_{v \in P([0,1])} J(v) \leq \sup_{\zeta \geq 0} J(\zeta u_0) < \frac{1}{N} S_{\lambda,s}^{N/2},
$$
hence
\begin{equation*} 
    m < \frac{1}{N} S_{\lambda,s}^{N/2}.
\end{equation*}
\noindent
This proves the desired bounds on $m$.

\noindent
By applying \cite[Theorem 2.2]{BN}, we conclude that there exists a sequence $\{u_j\} \text{ in } H_r^1(\bn)$ satisfying
\begin{equation} \label{Jc_seq}
    J(u_j) \to m
\end{equation}
and
\begin{equation} \label{Jprime_seq}
    \sup \left\{ \left| \langle J'(u_j), \varphi \rangle \right| 
    : \varphi \in H_r^1(\bn), \, \| \varphi \|_{\lambda} = 1 \right\} \to 0
\end{equation}
as $j \to +\infty$.

\bigskip
\noindent
\textbf{Step 2:} 
The sequence $\{u_j\}$ is bounded in $H_r^1(\bn)$.

\smallskip
\noindent
\textit{Proof of Step 2:}
From \eqref{Jc_seq} and \eqref{Jprime_seq}, it follows that there exists a constant $\tau > 0$ such that, for all $j \in \mathbb{N}$,
\begin{equation*} 
    | J(u_j) | \leq \tau,
\end{equation*}
and
\begin{equation*}
    \left| \left\langle J'(u_j), 
    \frac{u_j}{\| u_j \|_{\lambda}} \right\rangle \right| \leq \tau.
\end{equation*}
\noindent
Combining these two estimates, we obtain
\begin{equation} \label{Jc_estimate}
    J(u_j) - \frac{1}{p+1} 
    \langle J'(u_j), u_j \rangle 
    \leq \tau (1 + \| u_j \|_{\lambda}).
\end{equation}

\smallskip
\noindent
On the other hand, a direct computation shows that
\begin{align} \label{Jc_lower_bound}
    J(u_j) - \frac{1}{p+1} 
    \langle J'(u_j), u_j \rangle 
    \hspace{-0.07cm}&=\hspace{-0.1cm}\left( \frac{1}{2} - \frac{1}{p+1} \right) \left( \|u_j\|_{\lambda}^2 \hspace{-0.07cm}+ [u_j]_s^2 \right) + \left( \frac{1}{p+1} - \frac{1}{2^*} \right) \hspace{-0.2cm}\int_{\bn} |u_j|^{2^*}\hspace{-0.1cm}\dvg \\\notag
    &\geq \frac{p-1}{2(p+1)} \| u_j \|_{\lambda}^2,
\end{align}
where the inequality holds since $2 < p+1 < 2^*$. Thus, by comparing \eqref{Jc_estimate} and \eqref{Jc_lower_bound}, we conclude that $\| u_j \|_{\lambda}$ is bounded, thereby proving Step 2.

\bigskip
\noindent
\textbf{Step 3.} \textit{Problem} \eqref{mainEq2} \textit{admits a solution} $ u_\infty \in H_r^1(\mathbb{B}^N) $.

\medskip
\noindent
\textit{Proof of Step 3.} From the previous step, we know that the sequence $ \{u_j\}$ is bounded in $ H_r^1(\mathbb{B}^N) $. By reflexivity, there exists $ u_\infty \in H_r^1(\mathbb{B}^N) $ such that, up to a subsequence,
$$
u_j \rightharpoonup u_\infty \quad \text{weakly in} \quad H_r^1(\mathbb{B}^N).
$$
That is, for any $ \varphi \in H_r^1(\mathbb{B}^N) $,
\begin{align*}
    \int_{\bn}\grad_{\bn}u_j \grad_{\bn}\phi \dvg - \lambda\int_{\bn}u_j \phi \dvg \rightarrow  \int_{\bn}\grad_{\bn}u_\infty \grad_{\bn}\phi \dvg - \lambda\int_{\bn}u_\infty \phi  \dvg
\end{align*}
as $ j \to +\infty $.

\noindent
Moreover, by the embedding \eqref{1tosEmbedding}, we have
\begin{align*}
\int_{\bn}\int_{\bn} (u_j(x) - u_j(y))(\varphi(x) - \varphi(y)){\mathcal{K}_s}(d(x,y)) \dvg(x) \dvg(y)\\\notag
\rightarrow \int_{\bn}\int_{\bn}(u_{\infty}(x) - u_{\infty}(y))(\varphi(x) - \varphi(y)){\mathcal{K}_s}(d(x,y)) \dvg(x) \dvg(y)
\end{align*}
for any $ \varphi \in H_r^1(\mathbb{B}^N) $.

\noindent
Since $ \{u_j\}$ is bounded in $ H_r^1(\mathbb{B}^N) $, it follows from the Sobolev embedding \eqref{PoinSobIneq} that $ \{u_j\} $ is bounded in $ L^{2^*}(\mathbb{B}^N) $. Consequently,
\begin{equation*}
u_j \rightharpoonup u_\infty \quad \text{weakly in} \quad L^{2^*}(\mathbb{B}^N) 
\end{equation*}
as $ j \to +\infty $.

\noindent
Furthermore, using Lemma~\ref{radial Lemma}, we deduce that
\begin{equation}
u_j \to u_\infty \quad \text{strongly in} \quad L^\nu(\mathbb{B}^N) \quad \text{for any} \quad \nu \in (2, 2^*),\label{SubcrittTermConvergence}
\end{equation}
and
$$
u_j(x) \to u_\infty(x) \quad \text{almost everywhere in} \quad \mathbb{B}^N,
$$
as $ j \to +\infty $.

\noindent
Now, for any $ \varphi \in H_r^1(\mathbb{B}^N) $, we have by definition
$$
\begin{aligned}
\langle J'(u_j), \varphi \rangle =\,& \int_{\mathbb{B}^N} \nabla_{\mathbb{B}^N} u_j \nabla_{\mathbb{B}^N} \varphi \, dV_{\mathbb{B}^N} - \lambda \int_{\mathbb{B}^N} u_j \varphi \, dV_{\mathbb{B}^N} \\
&+ \int_{\bn}\int_{\bn} (u_j(x) - u_j(y)) (\varphi(x) - \varphi(y)) \mathcal{K}_s(d(x,y)) \, dV_{\mathbb{B}^N}(x) dV_{\mathbb{B}^N}(y) \\
&- \int_{\mathbb{B}^N} |u_j|^{2^*-2} u_j \varphi \, dV_{\mathbb{B}^N} - \int_{\mathbb{B}^N} |u_j|^{p-1} u_j \varphi \, dV_{\mathbb{B}^N}.
\end{aligned}
$$

\noindent
Keeping in mind \eqref{Jprime_seq}, and passing to the limit in the above expression as $j \to \infty$ while employing the weak and strong convergence results discussed above, together with Lemma~\ref{nonLinearTermConvergence}, we obtain
$$
\begin{aligned}
0 =\,& \int_{\mathbb{B}^N} \nabla_{\mathbb{B}^N} u_\infty \nabla_{\mathbb{B}^N} \varphi \, dV_{\mathbb{B}^N} - \lambda \int_{\mathbb{B}^N} u_\infty \varphi \, dV_{\mathbb{B}^N} \\
&+ \int_{\bn}\int_{\bn} (u_\infty(x) - u_\infty(y)) (\varphi(x) - \varphi(y)) \mathcal{K}_s(d(x,y)) \, dV_{\mathbb{B}^N}(x) dV_{\mathbb{B}^N}(y) \\
&- \int_{\mathbb{B}^N} |u_\infty|^{2^*-2} u_\infty \varphi \, dV_{\mathbb{B}^N} - \int_{\mathbb{B}^N} |u_\infty|^{p-1} u_\infty \varphi \, dV_{\mathbb{B}^N}
\end{aligned}
$$
for every $ \varphi \in H_r^1(\mathbb{B}^N) $.

\noindent
Therefore, $ u_\infty $ is a weak solution to Problem \eqref{mainEq2}, and the claim is proved.

\bigskip
\noindent
\textbf{Step 4.} The solution $u_{\infty}$ is non-trivial.

\smallskip
\noindent
\textit{Proof of Step 4:} Assume, by contradiction, that $u_{\infty} \equiv 0$ throughout $\bn$. 
\vspace{0.2cm}

\noindent
Recall from Step 2 that $\{u_j\}$ is bounded in $H_r^1(\bn)$. Then, employing \eqref{Jprime_seq}, we observe
\begin{align*}
0 &\leftarrow \langle \mathcal{J}_\lambda'(u_j), u_j \rangle \\ \notag
&= \int_{\bn} |\nabla_{\bn} u_j|^2 \dvg - \lambda \int_{\bn} |u_j|^2 \dvg \\ \notag
&\quad +\int_{\bn}\int_{\bn} |u_j(x) - u_j(y)|^2 \mathcal{K}_s(d(x,y)) \dvg(x) \dvg(y)- \int_{\bn} |u_j|^{2^*} \dvg\\\notag
&\quad - \int_{\bn} |u_j|^{p+1} \dvg,
\end{align*}
which, in light of \eqref{SubcrittTermConvergence}, leads to
\begin{equation}\label{eq1.2_new}
\|u_j\|_{\lambda}^2 + [u_j]_s^2 - \int_{\bn} |u_j|^{2^*} \dvg \to 0
\end{equation}
as $j \to +\infty$.

\smallskip

\noindent
Since Step 2 ensures that $\|u_j\|_{\lambda}$ remains bounded in $\mathbb{R}$, it follows, by \eqref{1tosEmbedding}, that $[u_j]_s$ is also bounded. Thus, possibly after passing to a subsequence, we can assume
\begin{align}\label{convEqn_new}
\|u_j\|_{\lambda}^2 + [u_j]_s^2 
&= \int_{\bn} |\nabla_{\bn} u_j|^2 \dvg - \lambda \int_{\bn} |u_j|^2 \dvg \\ \notag
&\quad + \int_{\bn}\int_{\bn} |u_j(x) - u_j(y)|^2 \mathcal{K}_s(d(x,y)) \dvg(x) \dvg(y) \\ \notag
&\to L
\end{align}
for some $L \in [0, +\infty)$.

\smallskip

\noindent
Consequently, by \eqref{eq1.2_new}, we deduce
\begin{equation}\label{CongEq2_new}
\int_{\bn} |u_j|^{2^*} \dvg \to L.
\end{equation}

\smallskip

\noindent
Moreover, from \eqref{Jc_seq} and taking the limit as $j \to +\infty$, we find
\begin{align}\label{mountainPassCriticalPtequality_new}
m &= \lim_{j\to+\infty} \left[ \frac{1}{2} \int_{\bn} |\nabla_{\bn} u_j|^2 \dvg - \frac{\lambda}{2} \int_{\bn} |u_j|^2 \dvg \right.\\ \notag
&\quad \quad \quad + \frac{1}{2} \int_{\bn}\int_{\bn} |u_j(x) - u_j(y)|^2 \mathcal{K}_s(d(x,y)) \dvg(x) \dvg(y) - \frac{1}{2^*} \int_{\bn} |u_j|^{2^*} \dvg \\\notag
&\quad \quad \quad - \left. \frac{1}{p+1} \int_{\bn} |u_j|^{p+1} \dvg \right] \\ \notag
&= \left( \frac{1}{2} - \frac{1}{2^*} \right) L = \frac{1}{N} L, 
\end{align}
where we used again that the subcritical term vanishes by \eqref{SubcrittTermConvergence} under the assumption $u_{\infty} \equiv 0$.

\smallskip

\noindent
Since Step 1 ensures $m \geq \beta > 0$, it follows that $L>0$. In addition, by \eqref{eq:SK},
$$
\|u_j\|_{\lambda}^2 + [u_j]_s^2 \geq S_{\lambda,s} \|u_j\|_{L^{2^*}(\bn)}^2,
$$
and passing to the limit, utilizing \eqref{convEqn_new} and \eqref{CongEq2_new}, yields
$$
L \geq S_{\lambda,s} L^{2/2^*}.
$$
Combining the above with \eqref{mountainPassCriticalPtequality_new}, we obtain
$$
m \geq \frac{1}{N} S_{\lambda,s}^{2^*/(2^*-2)} = \frac{1}{N} S_{\lambda,s}^{N/2},
$$
contradicting Step 1.

\smallskip

\noindent
Thus, we conclude that $u_{\infty} \not\equiv 0$ in $\bn$, completing Step 4.

\medskip

\noindent
This completes the proof of Theorem~\ref{CriticalExponentMainThm}. 
\end{proof}

\section{Appendix}\label{appendixSection}
\begin{lemma}[\textit{Weak Maximum Principle}]\label{weakMaxLemma}
Let $u \in H^1(\mathbb{B}^N)$ be a weak solution satisfying $Lu \geq 0$ in $\mathbb{B}^N$ in the weak sense, where the operator $L$ is defined by
$$
Lu := -\Delta_{\mathbb{B}^N} u + (-\Delta_{\mathbb{B}^N})^s u - \lambda u.
$$
Then $u \geq 0$ in $\mathbb{B}^N$.
\end{lemma}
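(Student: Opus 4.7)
\medskip
\noindent
\textbf{Proof Plan.} The natural strategy is to test the weak formulation of $Lu \geq 0$ against the negative part $u^-:=\max\{-u,0\}$ of $u$, and exploit the coercivity of $\|\cdot\|_\lambda$ guaranteed by the assumption $\lambda<\frac{(N-1)^2}{4}$. Since $u \in H^1(\bn)$ implies $u^- \in H^1(\bn)$, and since by Theorem~\ref{1tosTheorem} the embedding $H^1(\bn)\hookrightarrow H^s(\bn)$ ensures that $[u^-]_s<\infty$, the function $u^-$ is an admissible test function.

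\medskip
\noindent
With $u=u^+-u^-$ and $u^+u^-=0$ a.e., the local contributions are straightforward: one has $\nabla_{\bn} u\cdot \nabla_{\bn} u^- = -|\nabla_{\bn} u^-|^2$ a.e., and $-\lambda\, u\, u^- = \lambda |u^-|^2$. The first key step is therefore to handle the nonlocal term
\[
\mathcal{N}(u,u^-):=\int_{\bn}\int_{\bn}(u(x)-u(y))(u^-(x)-u^-(y))\,\mathcal{K}_s(d(x,y))\,\dvg(x)\dvg(y).
\]
I would write $u=u^+-u^-$ inside the double integral and split accordingly, reducing matters to a pointwise estimate on the kernel-free factor. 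A short case analysis on the signs of $u(x)$ and $u(y)$ (four cases) shows that
\[
(u^+(x)-u^+(y))(u^-(x)-u^-(y))\leq 0 \quad\text{a.e.},
\]
since in the mixed-sign cases one factor is $\geq 0$ and the other is $\leq 0$, while in the same-sign cases one of the two factors vanishes. Expanding the algebraic identity
\[
(u(x)-u(y))(u^-(x)-u^-(y)) = (u^+(x)-u^+(y))(u^-(x)-u^-(y)) - (u^-(x)-u^-(y))^2,
\]
and using the positivity of $\mathcal{K}_s$ from Lemma~\ref{RadDecreasingLemma}, I obtain $\mathcal{N}(u,u^-)\leq -[u^-]_s^2$.

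\medskip
\noindent
Assembling the three contributions, the hypothesis $\langle Lu,u^-\rangle\geq 0$ becomes
\[
0 \leq -\int_{\bn}|\nabla_{\bn} u^-|^2\dvg + \lambda\int_{\bn}|u^-|^2\dvg + \mathcal{N}(u,u^-) \leq -\|u^-\|_\lambda^2 - [u^-]_s^2.
\]
Since $\lambda<\frac{(N-1)^2}{4}$, the Poincar\'e inequality \eqref{firsteigen} ensures that $\|\cdot\|_\lambda$ is a norm equivalent to the $H^1(\bn)$ norm; hence $\|u^-\|_\lambda^2=0$ forces $u^-\equiv 0$, which yields $u\geq 0$ a.e.\ in $\bn$.

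\medskip
\noindent
The only mildly delicate point is the pointwise kernel inequality for the nonlocal part; this is exactly the hyperbolic analogue of the familiar Euclidean computation for $(-\Delta)^s$, and relies on the positivity of $\mathcal{K}_s$ established earlier. All remaining steps are routine manipulations in $H^1(\bn)$ combined with the coercivity furnished by the spectral gap assumption on $\lambda$.
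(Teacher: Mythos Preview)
Your proof is correct and follows essentially the same approach as the paper: test against $u^-$, split $u=u^+-u^-$, and use positivity of $\mathcal{K}_s$ to show the nonlocal cross term has the right sign, yielding $0\le -\|u^-\|_\lambda^2-[u^-]_s^2$ and hence $u^-\equiv 0$ via the coercivity of $\|\cdot\|_\lambda$. The only cosmetic difference is that the paper expands the cross term explicitly as $-\int_{\bn}\int_{\bn}\bigl(u^+(x)u^-(y)+u^-(x)u^+(y)\bigr)\mathcal{K}_s\,dV\,dV$, whereas you bound $(u^+(x)-u^+(y))(u^-(x)-u^-(y))\le 0$ by a sign analysis; these are algebraically identical.
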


\begin{proof}
We test the inequality $Lu \geq 0$ with $u^-$, the negative part of $u$. This yields:
\begin{align*}
0 &\leq -\int_{\mathbb{B}^N} |\nabla_{\mathbb{B}^N} u^-|^2 \, dV_{\mathbb{B}^N}\\ 
&+ \int_{\bn}\int_{\bn}  (u(x) - u(y))(u^-(x) - u^-(y)) \mathcal{K}_s(d(x, y)) \, dV_{\mathbb{B}^N}(x) dV_{\mathbb{B}^N}(y) 
+ \lambda \int_{\mathbb{B}^N} |u^-|^2 \, dV_{\mathbb{B}^N}.
\end{align*}
Splitting $u = u^+ - u^-$ and rearranging, we obtain
\begin{align*}
0 &\leq -\|u^-\|_\lambda^2 - [u^-]^2_s 
- \int_{\bn}\int_{\bn} \left(u^+(x) u^-(y) + u^-(x) u^+(y)\right) \mathcal{K}_s(d(x, y)) \, dV_{\mathbb{B}^N}(x) dV_{\mathbb{B}^N}(y).
\end{align*}
Since each term on the right-hand side is non-positive, the inequality implies that all must vanish. In particular, $\|u^-\|_\lambda = 0$ and $[u^-]_s = 0$, which forces $u^- \equiv 0$ in $\mathbb{B}^N$. Hence, $u \geq 0$ in $\mathbb{B}^N$, completing the proof.
\end{proof}

\smallskip

\begin{lemma}\label{nonLinearTermConvergence}
  
$$
\int_{\bn} |u_j|^{2^*-2} u_j v \, \dvg \longrightarrow \int_{\bn} | u_{\infty} |^{2^*-2} u_{\infty} v \, \dvg \quad \text{for all } v \in H^1(\bn),
$$
where the sequence $ \{u_j\} $ is as defined in the proof of Theorem~\ref{CriticalExponentMainThm}.
\end{lemma}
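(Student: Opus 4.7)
The plan is to establish this convergence via a standard duality argument, leveraging almost-everywhere convergence together with a uniform $L^{(2^*)'}$ bound on the nonlinear quantities, where $(2^*)' = 2N/(N+2)$ denotes the H\"older conjugate of $2^*$.

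First, I would collect the facts already established in the proof of Theorem~\ref{CriticalExponentMainThm}: the sequence $\{u_j\}$ is bounded in $H^1_r(\bn)$, so by the Poincar\'e--Sobolev inequality \eqref{PoinSobIneq} it is bounded in $L^{2^*}(\bn)$, and $u_j \to u_\infty$ almost everywhere in $\bn$. Continuity of $t \mapsto |t|^{2^*-2}t$ immediately yields
$$
|u_j|^{2^*-2} u_j \longrightarrow |u_\infty|^{2^*-2} u_\infty \quad \text{a.e. in } \bn.
$$
Simultaneously,
$$
\int_{\bn} \bigl| |u_j|^{2^*-2} u_j \bigr|^{(2^*)'} \dvg = \int_{\bn} |u_j|^{2^*} \dvg \leq C,
$$
so the sequence $\{|u_j|^{2^*-2} u_j\}$ is bounded in $L^{(2^*)'}(\bn)$.

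Next, I would invoke the classical functional-analytic lemma (cf.\ Brezis, \emph{Functional Analysis}, or Willem's monograph): a sequence $\{g_j\}$ bounded in $L^q(\Omega)$ with $1 < q < \infty$ that converges a.e.\ to some measurable function $g$ must converge weakly in $L^q(\Omega)$ to $g$. Applied with $g_j := |u_j|^{2^*-2} u_j$ and $q = (2^*)'$, this gives
$$
|u_j|^{2^*-2} u_j \rightharpoonup |u_\infty|^{2^*-2} u_\infty \quad \text{weakly in } L^{(2^*)'}(\bn),
$$
where the identification of the weak limit uses the a.e.\ limit together with Fatou's lemma to rule out loss of mass relative to $|u_\infty|^{2^*-2} u_\infty$.

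Finally, any $v \in H^1(\bn)$ embeds continuously into $L^{2^*}(\bn)$ via \eqref{PoinSobIneq}, so $v \in L^{2^*}(\bn) = (L^{(2^*)'}(\bn))^*$ is an admissible test element for the duality pairing, which delivers the claimed limit. I do not expect any single step to be a serious obstacle; the only mild subtlety is justifying the duality lemma, for which an alternative route via Egoroff's theorem and Vitali convergence on sets of small measure is available if a self-contained argument is preferred. The result is essentially a clean application of weak-$L^q$ convergence principles once the a.e.\ limit and the uniform $L^{(2^*)'}$ bound are in place.
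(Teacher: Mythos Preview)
Your argument is correct. Boundedness of $\{u_j\}$ in $L^{2^*}(\bn)$ gives a uniform $L^{(2^*)'}$ bound on $|u_j|^{2^*-2}u_j$, a.e.\ convergence is already available, and the lemma ``bounded in $L^q$ plus a.e.\ convergence implies weak $L^q$ convergence'' (valid on any $\sigma$-finite measure space for $1<q<\infty$) then does the job, since $v\in L^{2^*}(\bn)$ is a legitimate test function.

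The paper follows a different, more hands-on route: it splits the integral into $B(0,r)$ and its complement, applies Vitali's theorem on the ball (checking uniform integrability via H\"older against $|v|^{2^*}\in L^1$), and on the exterior uses the pointwise inequality $\bigl||u_j|^{2^*-2}u_j-|u_\infty|^{2^*-2}u_\infty\bigr|\le \varepsilon|u_j-u_\infty|^{2^*-1}+C_\varepsilon|u_\infty|^{2^*-1}$ together with the tail-smallness of $\int_{B(0,r)^c}|v|^{2^*}$. Your approach packages the same two ingredients (uniform integrability and tightness, both coming from $|v|^{2^*}\in L^1$) into a single black-box weak-convergence lemma, which is cleaner and avoids the explicit decomposition; the paper's proof is more self-contained but longer. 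Either way the analytic content is identical.
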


\begin{proof}
\begin{equation}
\int_{\bn} |u_j|^{2^*-2} u_j v \, \dvg = \int_{B(0,r)} |u_j|^{2^*-2} u_j v \, \dvg + \int_{\bn \setminus B(0,r)} |u_j|^{2^*-2} u_j v \, \dvg, \label{IntegralBreak}
\end{equation}
where the radius $r> 0$ will be chosen later.
First, we analyze the convergence on the ball $ B(0,r) $ using Vitali’s Convergence Theorem, noting that $ u_j \to u_\infty $ a.e. For any $ \varepsilon > 0 $, we choose $ \Omega \subset B(0,r) $ such that
$$
\left( \int_{\Omega} |v|^{2^*} \, \dvg(x) \right)^{\frac{1}{2^*}} \leq \frac{\varepsilon}{(M S_{\lambda,2^*-1}^{-1/2})^{2^*-1}},
$$
where $ M $ is a constant such that $ \|u_j\|_\lambda < M $ and $ S_{\lambda,2^*-1} $ is the best constant in the Poincaré-Sobolev inequality as given in \eqref{PoinSobIneq}. Since $ |v|^{2^*} \in L^1(\bn) $, the choice of $ \Omega $ is valid. By Hölder’s inequality, we then have
\begin{align*}
\left|\int_{\Omega} |u_j|^{2^*-2} u_j v \dvg\right|\leq  \int_{\Omega}|u_j|^{2^*-1}|v| \dvg\leq  &\left( \int_{\Omega}|u_j|^{2^*}\dvg\right)^{\frac{2^*-1}{2^*}} \left( \int_{\Omega}|v|^{2^*} \dvg\right)^{\frac{1}{2^*}}\\
&\leq  \left(S_{\lambda,2^*-1}^{-1/2}\|u_j\|_{\lambda}\right)^{2^*-1}  \left( \int_{\Omega}|v|^{2^*} \dvg\right)^{\frac{1}{2^*}}\\
&< \varepsilon,
\end{align*}
which shows that $ |u_j|^{2^*-2} u_j v $ is uniformly integrable on $ B(0,r) $. By Vitali’s Convergence Theorem, we can pass the limit $j \to \infty$ in the first integral of \eqref{IntegralBreak}.

\smallskip
\noindent 
Now, we focus on the second term in \eqref{IntegralBreak}. We define $ v_j = u_j - u_\infty $, which satisfies $ v_j \rightharpoonup 0 $ in $ H^1(\bn) $. For every $ \varepsilon > 0 $, there exists a constant $ C_\varepsilon > 0 $ such that
$$
\left| |v_j + u_{\infty}|^{2^*-2} (v_j + u_{\infty}) - |u_{\infty}|^{2^*-2} u_{\infty} \right| \leq \varepsilon |v_j|^{2^*-1} + C_\varepsilon |u_{\infty}|^{2^*-1}.
$$
Thus, we can estimate the difference between the integrals on $ B(0,r)^c $ as follows
\begin{align*}
&\left| \int_{B(0,r)^c}  \left\{ |u_j|^{2^*-2} u_j v - |u_{\infty}|^{2^*-2} u_{\infty} v \right\} \dvg\right|\\
&\leq  \varepsilon \int_{B(0,r)^c} |v_j|^{2^*-1} |v| \dvg+ C_\varepsilon \int_{B(0,r)^c} |u_{\infty}|^{2^*-1} |v| \dvg\\
&\leq  \varepsilon \left( \int_{B(0,r)^c} |v_j|^{2^*} \dvg\right)^{\frac{2^*-1}{2^*}} \left( \int_{B(0,r)^c}|v|^{2^*} \dvg\right)^{\frac{1}{2^*}}\\
& \quad \quad+ C_\varepsilon \left( \int_{B(0,r)^c} |u_{\infty}|^{2^*} \dvg\right)^{\frac{2^*-1}{2^*}} \left( \int_{B(0,r)^c}|v|^{2^*} \dvg\right)^{\frac{1}{2^*}}\\
&\leq C  \varepsilon \|v_j\|_{\lambda}^{2^*-1}\left( \int_{B(0,r)^c}|v|^{2^*} \dvg\right)^{\frac{1}{2^*}}+ C_\varepsilon \|u_{\infty}\|_{\lambda}^{2^*-1} \left( \int_{B(0,r)^c}|v|^{2^*} \dvg\right)^{\frac{1}{2^*}}.
\end{align*}
Since the sequence $ \{\|v_j\|_{\lambda}\} $ is uniformly bounded and $ |v|^{2^*} \in L^1(\bn) $, for any $ \varepsilon > 0 $, we can choose $ r $ sufficiently large such that
$$
\left| \int_{B(0,r)^c} \left\{ |u_j|^{2^*-2} u_j v - |u_{\infty}|^{2^*-2} u_{\infty} v \right\} \, \dvg \right| < \varepsilon.
$$
This concludes the proof.
\end{proof}

\begin{section}{Statements and Declarations}
\textbf{Acknowledgments.} 
Diksha Gupta is supported by the Prime Minister's Research Fellowship (PMRF ID - 1401219). 

\textbf{Conflict of interest.} Authors certify that there is no actual or potential conflict of interest in relation to this article. 
	
\end{section}

\bibliographystyle{plain}
\bibliography{ref}

\begin{thebibliography}{10}

\bibitem{AR}
Antonio Ambrosetti and Paul~H. Rabinowitz.
\newblock Dual variational methods in critical point theory and applications.
\newblock {\em J. Functional Analysis}, 14:349--381, 1973.

\bibitem{Baernstein1}
Albert Baernstein, II.
\newblock {\em Symmetrization in analysis}, volume~36 of {\em New Mathematical Monographs}.
\newblock Cambridge University Press, Cambridge, 2019.
\newblock With David Drasin and Richard S. Laugesen, With a foreword by Walter Hayman.

\bibitem{BanicaEtalFractionalLaplacianHyperbolic}
Valeria Banica, Mar\'ia del~Mar Gonz\'alez, and Mariel S\'aez.
\newblock Some constructions for the fractional {L}aplacian on noncompact manifolds.
\newblock {\em Rev. Mat. Iberoam.}, 31(2):681--712, 2015.

\bibitem{Beckner92}
William Beckner.
\newblock Sobolev inequalities, the {P}oisson semigroup, and analysis on the sphere {$S^n$}.
\newblock {\em Proc. Nat. Acad. Sci. U.S.A.}, 89(11):4816--4819, 1992.

\bibitem{BFG}
Elvise Berchio, Alberto Ferrero, and Gabriele Grillo.
\newblock Stability and qualitative properties of radial solutions of the {L}ane-{E}mden-{F}owler equation on {R}iemannian models.
\newblock {\em J. Math. Pures Appl. (9)}, 102(1):1--35, 2014.

\bibitem{OurCCM}
Mousomi Bhakta, Debdip Ganguly, Diksha Gupta, and Alok~Kumar Sahoo.
\newblock A global compactness result and multiplicity of solutions for a class of critical exponent problems in the hyperbolic space.
\newblock {\em Communications in Contemporary Mathematics}, page 2450045, 2025/05/10 2024.

\bibitem{bhakta2024poincaresobolevequationscriticalexponent}
Mousomi Bhakta, Debdip Ganguly, Diksha Gupta, and Alok~Kumar Sahoo.
\newblock Poincar\'e-sobolev equations with the critical exponent and a potential in the hyperbolic space.
\newblock \url{https://arxiv.org/abs/2410.03164}, 2024.

\bibitem{MousomiUTF}
Mousomi Bhakta and K.~Sandeep.
\newblock Poincar{\'e}--sobolev equations in the hyperbolic space.
\newblock {\em Calculus of Variations and Partial Differential Equations}, 44(1):247--269, 2012.

\bibitem{biagi}
Stefano Biagi, Serena Dipierro, Enrico Valdinoci, and Eugenio Vecchi.
\newblock A brezis-nirenberg type result for mixed local and nonlocal operators.
\newblock \url{https://arxiv.org/abs/2209.07502}, 2022.

\bibitem{ValdinociMaximum}
Stefano Biagi, Serena Dipierro, Enrico Valdinoci, and Eugenio Vecchi.
\newblock Mixed local and nonlocal elliptic operators: regularity and maximum principles.
\newblock {\em Comm. Partial Differential Equations}, 47(3):585--629, 2022.

\bibitem{FractionalValdinoci1}
Stefano Biagi, Eugenio Vecchi, Serena Dipierro, and Enrico Valdinoci.
\newblock Semilinear elliptic equations involving mixed local and nonlocal operators.
\newblock {\em Proc. Roy. Soc. Edinburgh Sect. A}, 151(5):1611--1641, 2021.

\bibitem{BGGV}
Matteo Bonforte, Filippo Gazzola, Gabriele Grillo, and Juan~Luis V\'azquez.
\newblock Classification of radial solutions to the {E}mden-{F}owler equation on the hyperbolic space.
\newblock {\em Calc. Var. Partial Differential Equations}, 46(1-2):375--401, 2013.

\bibitem{BonheureVanSchaftingen2020}
D.~Bonheure and J.~Van Schaftingen.
\newblock Groundstates for a class of nonlinear choquard equations with mixed local and nonlocal terms.
\newblock {\em Proc. Amer. Math. Soc.}, 148(5):2061--2072, 2020.

\bibitem{BrezisNirenberg1983}
H.~Brezis and L.~Nirenberg.
\newblock Positive solutions of nonlinear elliptic equations involving critical sobolev exponents.
\newblock {\em Comm. Pure Appl. Math.}, 36:437--477, 1983.

\bibitem{BN}
Ha\"im Br\'ezis and Louis Nirenberg.
\newblock Positive solutions of nonlinear elliptic equations involving critical {S}obolev exponents.
\newblock {\em Comm. Pure Appl. Math.}, 36(4):437--477, 1983.

\bibitem{BrockSolynin}
Friedemann Brock and Alexander~Yu. Solynin.
\newblock An approach to symmetrization via polarization.
\newblock {\em Trans. Amer. Math. Soc.}, 352(4):1759--1796, 2000.

\bibitem{ByeonScalarField}
Jaeyoung Byeon, Ohsang Kwon, and Jinmyoung Seok.
\newblock Nonlinear scalar field equations involving the fractional {L}aplacian.
\newblock {\em Nonlinearity}, 30(4):1659--1681, 2017.

\bibitem{CafferelliSilvsitreEuclidean}
Luis Caffarelli and Luis Silvestre.
\newblock An extension problem related to the fractional {L}aplacian.
\newblock {\em Comm. Partial Differential Equations}, 32(7-9):1245--1260, 2007.

\bibitem{fracLaplClosedManifolds}
Michele Caselli, Enric Florit-Simon, and Joaquim Serra.
\newblock Fractional sobolev spaces on riemannian manifolds.
\newblock {\em Mathematische Annalen}, 390(4):6249--6314, 2024.

\bibitem{ManciniHardySobolev}
D.~Castorina, I.~Fabbri, G.~Mancini, and K.~Sandeep.
\newblock Hardy-{S}obolev extremals, hyperbolic symmetry and scalar curvature equations.
\newblock {\em J. Differential Equations}, 246(3):1187--1206, 2009.

\bibitem{Caffsilmanifold}
Sun-Yung~Alice Chang and Mar\'ia del~Mar Gonz\'alez.
\newblock Fractional {L}aplacian in conformal geometry.
\newblock {\em Adv. Math.}, 226(2):1410--1432, 2011.

\bibitem{Ch}
Isaac Chavel.
\newblock {\em Riemannian geometry---a modern introduction}, volume 108 of {\em Cambridge Tracts in Mathematics}.
\newblock Cambridge University Press, Cambridge, 1993.

\bibitem{ChenFelmerQuaas2019}
W.~Chen, P.~Felmer, and A.~Quaas.
\newblock Large solutions to elliptic equations involving fractional laplacian.
\newblock {\em Ann. Inst. H. Poincaré Anal. Non Linéaire}, 36(3):637--654, 2019.

\bibitem{IE}
I.~Ekeland.
\newblock On the variational principle.
\newblock {\em J. Math. Anal. Appl.}, 47:324--353, 1974.

\bibitem{FallValdinoci2022}
M.~M. Fall and E.~Valdinoci.
\newblock Uniqueness and nondegeneracy of positive solutions of fractional elliptic equations.
\newblock {\em Commun. Contemp. Math.}, 24(6):Paper No. 2150037, 2022.

\bibitem{GGS2}
Debdip Ganguly, Diksha Gupta, and K.~Sreenadh.
\newblock Existence of high energy-positive solutions for a class of elliptic equations in the hyperbolic space.
\newblock {\em J. Geom. Anal.}, 33(3):Paper No. 79, 27, 2023.

\bibitem{OurPRSE}
Debdip Ganguly, Diksha Gupta, and K.~Sreenadh.
\newblock Multiplicity of positive solutions for a class of nonhomogeneous elliptic equations in the hyperbolic space.
\newblock {\em Proceedings of the Royal Society of Edinburgh: Section A Mathematics}, page 1–39, 2024.

\bibitem{asympOur}
Debdip Ganguly, Diksha Gupta, and K.~Sreenadh.
\newblock On a class of elliptic equations with critical perturbations in the hyperbolic space.
\newblock {\em Asymptot. Anal.}, 138(4):225--253, 2024.

\bibitem{DG1}
Debdip Ganguly and Sandeep Kunnath.
\newblock Sign changing solutions of the brezis-nirenberg problem in the hyperbolic space.
\newblock {\em Calculus of Variations and Partial Differential Equations}, 50(1):69--91, 2014.

\bibitem{DG2}
Debdip Ganguly and Kunnath Sandeep.
\newblock Nondegeneracy of positive solutions of semilinear elliptic problems in the hyperbolic space.
\newblock {\em Commun. Contemp. Math.}, 17(1):1450019, 13, 2015.

\bibitem{Garain1}
Prashanta Garain and Alexander Ukhlov.
\newblock Mixed local and nonlocal {S}obolev inequalities with extremal and associated quasilinear singular elliptic problems.
\newblock {\em Nonlinear Anal.}, 223:Paper No. 113022, 35, 2022.

\bibitem{LayerSolutionMaria}
Mar\'ia del~Mar Gonz\'alez, Mariel S\'aez, and Yannick Sire.
\newblock Layer solutions for the fractional {L}aplacian on hyperbolic space: existence, uniqueness and qualitative properties.
\newblock {\em Ann. Mat. Pura Appl. (4)}, 193(6):1823--1850, 2014.

\bibitem{PanietzHyperbolic}
Hans-Christoph Grunau, Mohameden Ould~Ahmedou, and Wolfgang Reichel.
\newblock The {P}aneitz equation in hyperbolic space.
\newblock {\em Ann. Inst. H. Poincar\'e{} C Anal. Non Lin\'eaire}, 25(5):847--864, 2008.

\bibitem{BinlinZhangManifold}
Lifeng Guo, Binlin Zhang, and Yadong Zhang.
\newblock Fractional {$p$}-{L}aplacian equations on {R}iemannian manifolds.
\newblock {\em Electron. J. Differential Equations}, pages Paper No. 156, 17, 2018.

\bibitem{gupta2024existencesymmetryregularityground}
Diksha Gupta and K.~Sreenadh.
\newblock Existence, symmetry and regularity of ground states of a non linear choquard equation in the hyperbolic space.
\newblock \url{https://arxiv.org/abs/2409.10236}, 2024.

\bibitem{harnackinequalityfractionallaplaciantype}
Jongmyeong Kim, Minhyun Kim, and Ki-Ahm Lee.
\newblock Harnack inequality for fractional laplacian-type operators on hyperbolic spaces.
\newblock \url {https://arxiv.org/abs/2108.08746}, 2021.

\bibitem{fractionalplaplacianhyperbolicspaces}
Jongmyeong Kim, Minhyun Kim, and Ki-Ahm Lee.
\newblock The fractional $p$-laplacian on hyperbolic spaces.
\newblock \url{https://arxiv.org/abs/2210.07029}, 2022.

\bibitem{Lions1985}
P.-L. Lions.
\newblock The concentration-compactness principle in the calculus of variations. the limit case, part 1.
\newblock {\em Rev. Mat. Iberoamericana}, 1(1):145--201, 1985.

\bibitem{LP}
Congwen Liu and Lizhong Peng.
\newblock Generalized {H}elgason-{F}ourier transforms associated to variants of the {L}aplace-{B}eltrami operators on the unit ball in {$\Bbb R^n$}.
\newblock {\em Indiana Univ. Math. J.}, 58(3):1457--1491, 2009.

\bibitem{Lu1}
Guozhen Lu and Qiaohua Yang.
\newblock Paneitz operators on hyperbolic spaces and high order {H}ardy-{S}obolev-{M}az'ya inequalities on half spaces.
\newblock {\em Amer. J. Math.}, 141(6):1777--1816, 2019.

\bibitem{Lu2}
Guozhen Lu and Qiaohua Yang.
\newblock Green's functions of {P}aneitz and {GJMS} operators on hyperbolic spaces and sharp {H}ardy-{S}obolev-{M}az'ya inequalities on half spaces.
\newblock {\em Adv. Math.}, 398:Paper No. 108156, 42, 2022.

\bibitem{VariationalForMixed}
Alberto Maione, Dimitri Mugnai, and Eugenio Vecchi.
\newblock Variational methods for nonpositive mixed local-nonlocal operators.
\newblock {\em Fract. Calc. Appl. Anal.}, 26(3):943--961, 2023.

\bibitem{MS}
Gianni Mancini and Kunnath Sandeep.
\newblock On a semilinear elliptic equation in {$\Bbb H^n$}.
\newblock {\em Ann. Sc. Norm. Super. Pisa Cl. Sci. (5)}, 7(4):635--671, 2008.

\bibitem{DiNezzaPalatucciValdinoci2012}
E.~Di Nezza, G.~Palatucci, and E.~Valdinoci.
\newblock Hitchhiker’s guide to the fractional sobolev spaces.
\newblock {\em Bull. Sci. Math.}, 136(5):521--573, 2012.

\bibitem{symmetricPrinciple}
Richard~S. Palais.
\newblock The principle of symmetric criticality.
\newblock {\em Communications in Mathematical Physics}, 69(1):19--30, 1979.

\bibitem{RatcliffeBook}
John~G. Ratcliffe.
\newblock {\em Foundations of hyperbolic manifolds}, volume 149 of {\em Graduate Texts in Mathematics}.
\newblock Springer-Verlag, New York, 1994.

\bibitem{RosOton2014}
X.~Ros-Oton.
\newblock Nonlocal elliptic equations in bounded domains: a survey.
\newblock {\em Publ. Mat.}, 60(1):3--26, 2016.

\bibitem{SC}
Laurent Saloff-Coste.
\newblock {\em Aspects of {S}obolev-type inequalities}, volume 289 of {\em London Mathematical Society Lecture Note Series}.
\newblock Cambridge University Press, Cambridge, 2002.

\bibitem{SC2}
Laurent Saloff-Coste.
\newblock Pseudo-{P}oincar\'e{} inequalities and applications to {S}obolev inequalities.
\newblock In {\em Around the research of {V}ladimir {M}az'ya. {I}}, volume~11 of {\em Int. Math. Ser. (N. Y.)}, pages 349--372. Springer, New York, 2010.

\bibitem{ServadeiValdinoci2014}
R.~Servadei and E.~Valdinoci.
\newblock Variational methods for non-local operators of elliptic type.
\newblock {\em Discrete Contin. Dyn. Syst.}, 33(5):2105--2137, 2013.

\bibitem{servadei2015brezis}
Raffaella Servadei and Enrico Valdinoci.
\newblock The brezis-nirenberg result for the fractional laplacian.
\newblock {\em Transactions of the American Mathematical Society}, 367(1):67--102, 2015.

\bibitem{Struwe1984}
M.~Struwe.
\newblock A global compactness result for elliptic boundary value problems involving limiting nonlinearities.
\newblock {\em Math. Z.}, 187:511--517, 1984.

\end{thebibliography}
\end{document}